\newcommand{\R}{\mathbb{R}}
\newcommand{\N}{\mathbb{N}}
\newcommand{\Z}{\mathbb{Z}}
\newcommand{\bfx}{\textbf{x}}
\newcommand{\calc}{\mathcal{C}}
\newcommand{\dsp}{\displaystyle}
\newcommand{\eps}{\varepsilon}
\newcommand{\abs}[1]{\lvert#1\rvert}
\newcommand{\norm}[1]{\lVert#1\rVert}
\newcommand{\Soniacommente}[1]{\textcolor{red}{}}
\newcommand{\lracco}[1]{\left\{#1\right\}}
\newcommand{\lrpar}[1]{\left(#1\right)}
\newcommand{\lrres}{\left.\right|}
 \newtheorem{remark}[theorem]{Remark}
\title{A Dirichlet-to-Neumann approach for the exact computation of guided modes in photonic crystal waveguides}
\begin{document}
\author{Sonia Fliss\footnotemark[2]}
\renewcommand{\thefootnote}{\fnsymbol{footnote}}
\footnotetext[2]{POEMS, (UMR 7231 CNRS/ENSTA/INRIA), \\
 ENSTA ParisTech, \\
828, boulevard des Mar\'echaux,
91762 Palaiseau Cedex \\
 \texttt{sonia.fliss@ensta-paristech.fr}}

\renewcommand{\thefootnote}{\arabic{footnote}}

\maketitle

\begin{abstract} 
This works deals with one dimensional infinite perturbation - namely line defects - in periodic media. In optics, such defects are created to construct an (open) waveguide that concentrates light. The existence and the computation of the eigenmodes is a crucial issue. This is related to a self-adjoint eigenvalue problem associated to a PDE in an unbounded domain (in the directions orthogonal to the line defect), which makes both the analysis and the computations more complex. Using a Dirichlet-to-Neumann (DtN) approach, we show that this problem is equivalent to one set on a small neighborhood of the defect. On contrary to existing methods, this one is exact but there is a price to be paid : the reduction of the problem leads to a nonlinear eigenvalue problem of a fixed point nature.
\end{abstract}
\begin{keywords} 
Periodic media, line defect, guided waves, spectral analysis, Dirichlet-to-Neumann operator
\end{keywords}

\begin{AMS}
78A10,78A48,47A70,35P99
\end{AMS}

\pagestyle{myheadings}
\thispagestyle{plain}

\section{Introduction}

\noindent Periodic media play a major role in applications, in particular in optics for micro and nano-technology \cite{Joannopoulos:1995,Johnson:2002,Kuchment:2001,Sakoda:2001}.
From the point of view of applications, one of the main interesting features is that it can exist intervals of frequencies for which the propagative waves cannot exist in the media. This phenomenon is due to the fact that a wave on the media is multiply scattered by the periodic structure, which can lead, depending on the characteristics of the media and the frequency, to possibly destructive interferences. It seems then that an appropriate choice of the structure and the dielectric materials of the photonic crystal can create particular band gap and then, from a practical point of view, banish some monochromatic electromagnetic waves. Thus, the periodic media could be used to several potential applications such as in the realization of filters, antennas and more generally, components used in telecommunications. 
\\\\
Mathematically, this property is linked to the gap structure of the spectrum of the underlying differential operator appearing in the model. For a complete, mathematically oriented presentation, we refer the reader to \cite{Kuchment:2001,Kuchment:2004}. Even if the necessary conditions for the existence of band gaps are not known -except for the one dimensional case in \cite{Borg:1946} where authors state that the absence of gaps implies that the coefficients of the media are constant-, sufficient conditions exist. Figotin and Kuchment have given examples of high contrast medium for which at least one band gap exists and can be characterized \cite{Figotin:1996a,Figotin:1996b}. Using asymptotic arguments, Nazarov and co-workers have established that a small perturbation of an homogeneous waveguide can open a gap in the continuous spectrum of the operator \cite{Nazarov:2010,Cardone:2009}. Moreover, other band gap structures have been characterized through numerical approaches in \cite{Figotin:1997a}. 
 \\\\
Besides, Sommerfeld and Bethe in 1933 made the conjecture that the number of gaps has to be finite in 2D and 3D. This conjecture has been proven by Skriganov for Schr\"odinger operator in 2D \cite{Skriganov:1979, Popov:1981} and in 3D \cite{Skriganov:1983,Skriganov:1985} - the last article goes further: if the contrast is too small, gaps cannot exist and on suitable assumptions on the lattice, the conjecture holds for higher dimensions. Parnovski \cite{Parnovski:2008} has generalized this last result provided that the potential is sufficiently smooth and in \cite{Parnovski:2010} the conjecture was proven in full generality. Another proof can be found in the book of Yu Karpeshina \cite{Karpeshina:1997}. For Maxwell equations, it is much more complicated. Nevertheless, there is now evidence that gaps exists. Experimentally this was first observed by Yablonovitch et al \cite{Yablonovitch:1991}. There are also many numerical results for different structures (see \cite{Soukoulis:1993,Soukoulis:1996} for references). Recently, Vorobets in \cite{Vorobets:2011} has proven the conjecture for 2D periodic Maxwell operator (which corresponds to a 2D periodic photonic crystal which is homogeneous in the third direction) with separable dielectric function. Moreover, if the dielectric function is close enough to a constant, there is no gaps at all.
\\\\
These media can present perturbations or defects which are introduced in the media to change their properties. Thus, in optics, in order to produce lasers, fibers or waveguides in general, it is necessary to have authorized frequencies inside the forbidden intervals of frequencies. This property can be obtained, for example, introducing localized defect or line defect and corresponds, from a mathematical point of view, to isolated eigenvalues of finite multiplicity inside the gaps of the underlying differential operator appearing in the model.
\\\\
Figotin and Klein have proven rigorously that introducing a defect in a periodic structure, namely a perturbation of compact support, can create defect modes, which are eigenvectors associated to eigenvalues inside a gap \cite{Figotin:1997,Figotin:1998a,Figotin:1998b}. More precisely, these defect modes are stationary waves exponentially decreasing far from the perturbation, then they seems to "live" around the defect which explains their name. Using asymptotic arguments, Nazarov has studied in \cite{Nazarov:2011} sufficient conditions for existence of eigenvalues below the essential spectrum for elastic waveguide. Here again, it seems difficult to obtain necessary conditions for the existence of defect modes. 
\\\\
There are very few works in the same spirit in the mathematical literature for the case of line defect, that means a one dimensional perturbation of the periodic medium. A first natural question concerns the spectrum of the perturbed operator compared to the spectrum of the perfectly periodic operator. More precisely, if a gap is in the spectrum of the perfectly periodic operator, can a part of the spectrum of the perturbed one arise in this gap? Moreover, if a part of a spectrum arises, is it of absolutely continuous or singular type? Does it corresponds to \emph{guided modes}, meaning that the waves are confined to the guide (evanescent in the periodic media) and they are propagating along the line defect? Kuchment et Ong \cite{Kuchment:2003} have answered partially to these questions for the case of homogeneous line defect. Indeed, for any gap $(a,b)$ of the perfectly periodic operator, they found a sufficient condition on $a,\,b$, the characteristics of the line defect (its size and its coefficients) such that spectrum of the perturbed operator arises in the gap. Moreover, they  have shown that the associated eigenvector is confined to the guide. However, they have not concluded on the existence or not of bound states - this question is linked to the nature of the spectrum. Vu Hoang and Maria Radosz in \cite{Hoang:2012} have shown for waveguide in 2D periodic structures the absence of bound states. It seems then that if the spectrum arises in the gap, it corresponds to guided modes. Let us remark that for the particular case of a line periodic perturbation in a homogeneous media, Bonnet-BenDhia and Starling \cite{Bonnet:1994} have given necessary and sufficient conditions for the existence of guided modes.
\\\\
Finally, few works (\cite{Ammari:2004} and the present one) characterized precisely (respectively using Green's function or DtN operator) the guided waves if they exist, for general line defect. These particular modes can help for the determination of the spectrum of the perturbed operator. The advantage is that the study can be reduced to a band (bounded in the direction of the periodicity and infinite in the other direction). 
\\\\
From a numerical point of view, there exist only few methods. The most known is the \emph{Supercell method}. It consists in making computations in a bounded domain of large size with periodic boundary conditions, the resulting solution converging to the true solution when the size tends to infinity. The convergence for the computation of defect modes has been shown for 2D problems and compact perturbations in \cite{Soussi:2005} and generalized to 3D problems and to exponentially decreasing perturbations in \cite{Cances:2012}. In this case, as the localized modes are exponentially decreasing, this convergence is exponentially fast with respect to the size of the truncated domain. In practice, this approach replaces the eigenvalue problem set in an unbounded domain to an approximated one set in a bounded domain. See \cite{Schmidt:2010} for numerical results. The main drawback of this strategy relies on the increase of the computational cost, especially when a mode is not well confined. We can mention also the fictitious source superposition method \cite{Botten:2006} and the reflective scattering matrix method.
\\\\
By adapting to eigenvalue problems the construction of Dirichlet-to-Neumann operators originally developed for scattering problems \cite{Fliss:2006,Fliss:2008,Fliss:2009}, we want to offer a rigorously justified alternative to existing methods. Compared to the supercell method, the DtN method allows us to reduce the numerical computation to a small neighborhood of the defect independently from the confinement of the computed guided modes. Moreover, as the method is exact, we improve the accuracy for non well-confined guided modes. Obviously, there is a price to be paid : the reduction of the problem leads to a non linear eigenvalue problem, of a fixed point nature. However, this difficulty has been already overcome for homogeneous open waveguides for which the DtN approach is well known \cite{Joly:1999,Pedreira:2001a,Pedreira:2001b}. 
\section{Model problem}
\noindent In order to describe the medium of study, let us first consider a two-dimensional periodic medium - \emph{the photonic crystal} - characterized by a coefficient $\rho_p$ (typically the square of the refraction index of the medium) which is
\begin{itemize}
	\item a $L^\infty function$ : $\exists \rho_-,\rho_+, \quad 0<\rho_-\leq \rho_p(x,y)\leq \rho_+$;
	\item periodic in the two directions
	\[
		\exists L_x,L_y>0,\;\forall n,m\in\Z,\;\forall(x,y)\in\R^2\quad \rho_p(x+nL_x,y+mL_y) = \rho_p(x,y)
	\]
\end{itemize} 
where the periods $L_x$ and $L_y$ are not necessarily equal.We introduce a one dimensional infinite perturbation - called the \emph{line defect} - in the $y-$direction in 
\[
	\Omega_0 \:=\: ]-a,a[ \:\times\: \mathbb{R}
\]
and characterized by a coefficient $\rho_0$ which is
\begin{itemize}
	\item a $L^\infty function$ satisfying $ 0<\rho_-\leq \rho_0\leq \rho_+$;
	\item periodic in the $y-$direction
	\[
		\exists L_y^0>0,\;\forall m\in\Z,\;\forall(x,y)\in\Omega^0\quad \rho_0(x,y+mL_y^0) = \rho_0(x,y)
	\]
	where $L_y$ and $L_y^0$ are commensurate. Without lack of generality, we suppose $L_y=L_y^0$.
\end{itemize}
The propagation medium is then characterized by the function $\rho$ defined by 
\begin{equation}\label{eq:def_rho}
	\begin{array}{r|l}
		\rho(x,y)=&\rho_p(x,y)\quad\text{in}\quad\R^2\setminus\Omega_0\\[3pt]
				&\rho_0(x,y)\quad\text{in}\quad\Omega_0.
			\end{array}
\end{equation}

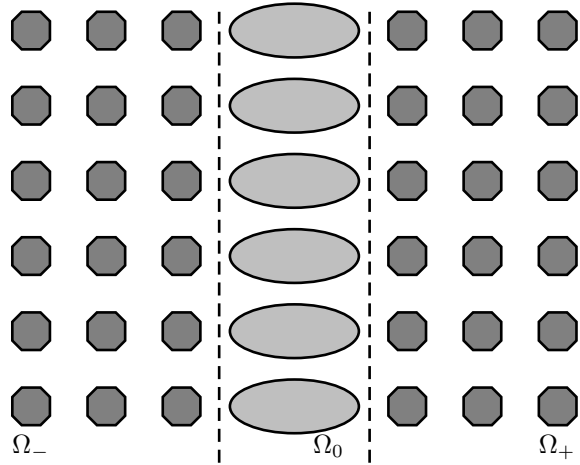
\begin{figure}[htbp]
\begin{center}
	\psset{unit=0.25cm} \psset{linewidth=1pt}
	\begin{pspicture}(-5,-6)(50,50)
	\pspolygon[fillstyle = solid,fillcolor=gray](2.5,-2)(3.5,-2)(4,-1.5)(4,-0.5)(3.5,0)(2.5,0)(2,-0.5)(2,-1.5)
	\pspolygon[fillstyle = solid,fillcolor=gray](6.5,-2)(7.5,-2)(8,-1.5)(8,-0.5)(7.5,0)(6.5,0)(6,-0.5)(6,-1.5)
	\pspolygon[fillstyle = solid,fillcolor=gray](10.5,-2)(11.5,-2)(12,-1.5)(12,-0.5)(11.5,0)(10.5,0)(10,-0.5)(10,-1.5)
	\psellipse[fillstyle = solid,fillcolor=lightgray](17,-1)(3.5,1.5)
	\pspolygon[fillstyle = solid,fillcolor=gray](22.5,-2)(23.5,-2)(24,-1.5)(24,-0.5)(23.5,0)(22.5,0)(22,-0.5)(22,-1.5)
	\pspolygon[fillstyle = solid,fillcolor=gray](26.5,-2)(27.5,-2)(28,-1.5)(28,-0.5)(27.5,0)(26.5,0)(26,-0.5)(26,-1.5)
	\pspolygon[fillstyle = solid,fillcolor=gray](30.5,-2)(31.5,-2)(32,-1.5)(32,-0.5)(31.5,0)(30.5,0)(30,-0.5)(30,-1.5)
	\pspolygon[fillstyle = solid,fillcolor=gray](2.5,2)(3.5,2)(4,2.5)(4,3.5)(3.5,4)(2.5,4)(2,3.5)(2,2.5)
	\pspolygon[fillstyle = solid,fillcolor=gray](6.5,2)(7.5,2)(8,2.5)(8,3.5)(7.5,4)(6.5,4)(6,3.5)(6,2.5)
	\pspolygon[fillstyle = solid,fillcolor=gray](10.5,2)(11.5,2)(12,2.5)(12,3.5)(11.5,4)(10.5,4)(10,3.5)(10,2.5)
	\psellipse[fillstyle = solid,fillcolor=lightgray](17,3)(3.5,1.5)
	\pspolygon[fillstyle = solid,fillcolor=gray](22.5,2)(23.5,2)(24,2.5)(24,3.5)(23.5,4)(22.5,4)(22,3.5)(22,2.5)
	\pspolygon[fillstyle = solid,fillcolor=gray](26.5,2)(27.5,2)(28,2.5)(28,3.5)(27.5,4)(26.5,4)(26,3.5)(26,2.5)
	\pspolygon[fillstyle = solid,fillcolor=gray](30.5,2)(31.5,2)(32,2.5)(32,3.5)(31.5,4)(30.5,4)(30,3.5)(30,2.5)
	\pspolygon[fillstyle = solid,fillcolor=gray](2.5,6)(3.5,6)(4,6.5)(4,7.5)(3.5,8)(2.5,8)(2,7.5)(2,6.5)
	\pspolygon[fillstyle = solid,fillcolor=gray](6.5,6)(7.5,6)(8,6.5)(8,7.5)(7.5,8)(6.5,8)(6,7.5)(6,6.5)
	\pspolygon[fillstyle = solid,fillcolor=gray](10.5,6)(11.5,6)(12,6.5)(12,7.5)(11.5,8)(10.5,8)(10,7.5)(10,6.5)
	\psellipse[fillstyle = solid,fillcolor=lightgray](17,7)(3.5,1.5)
	\pspolygon[fillstyle = solid,fillcolor=gray](22.5,6)(23.5,6)(24,6.5)(24,7.5)(23.5,8)(22.5,8)(22,7.5)(22,6.5)
	\pspolygon[fillstyle = solid,fillcolor=gray](26.5,6)(27.5,6)(28,6.5)(28,7.5)(27.5,8)(26.5,8)(26,7.5)(26,6.5)
	\pspolygon[fillstyle = solid,fillcolor=gray](30.5,6)(31.5,6)(32,6.5)(32,7.5)(31.5,8)(30.5,8)(30,7.5)(30,6.5)
	\pspolygon[fillstyle = solid,fillcolor=gray](2.5,10)(3.5,10)(4,10.5)(4,11.5)(3.5,12)(2.5,12)(2,11.5)(2,10.5)
	\pspolygon[fillstyle = solid,fillcolor=gray](6.5,10)(7.5,10)(8,10.5)(8,11.5)(7.5,12)(6.5,12)(6,11.5)(6,10.5)
	\pspolygon[fillstyle = solid,fillcolor=gray](10.5,10)(11.5,10)(12,10.5)(12,11.5)(11.5,12)(10.5,12)(10,11.5)(10,10.5)
	\pspolygon[fillstyle = solid,fillcolor=gray](22.5,10)(23.5,10)(24,10.5)(24,11.5)(23.5,12)(22.5,12)(22,11.5)(22,10.5)
	\psellipse[fillstyle = solid,fillcolor=lightgray](17,11)(3.5,1.5)
	\pspolygon[fillstyle = solid,fillcolor=gray](26.5,10)(27.5,10)(28,10.5)(28,11.5)(27.5,12)(26.5,12)(26,11.5)(26,10.5)
	\pspolygon[fillstyle = solid,fillcolor=gray](30.5,10)(31.5,10)(32,10.5)(32,11.5)(31.5,12)(30.5,12)(30,11.5)(30,10.5)
	\pspolygon[fillstyle = solid,fillcolor=gray](2.5,14)(3.5,14)(4,14.5)(4,15.5)(3.5,16)(2.5,16)(2,15.5)(2,14.5)
	\pspolygon[fillstyle = solid,fillcolor=gray](6.5,14)(7.5,14)(8,14.5)(8,15.5)(7.5,16)(6.5,16)(6,15.5)(6,14.5)
	\pspolygon[fillstyle = solid,fillcolor=gray](10.5,14)(11.5,14)(12,14.5)(12,15.5)(11.5,16)(10.5,16)(10,15.5)(10,14.5)
	\psellipse[fillstyle = solid,fillcolor=lightgray](17,15)(3.5,1.5)
	\pspolygon[fillstyle = solid,fillcolor=gray](22.5,14)(23.5,14)(24,14.5)(24,15.5)(23.5,16)(22.5,16)(22,15.5)(22,14.5)
	\pspolygon[fillstyle = solid,fillcolor=gray](26.5,14)(27.5,14)(28,14.5)(28,15.5)(27.5,16)(26.5,16)(26,15.5)(26,14.5)
	\pspolygon[fillstyle = solid,fillcolor=gray](30.5,14)(31.5,14)(32,14.5)(32,15.5)(31.5,16)(30.5,16)(30,15.5)(30,14.5)
	\pspolygon[fillstyle = solid,fillcolor=gray](2.5,18)(3.5,18)(4,18.5)(4,19.5)(3.5,20)(2.5,20)(2,19.5)(2,18.5)
	\pspolygon[fillstyle = solid,fillcolor=gray](6.5,18)(7.5,18)(8,18.5)(8,19.5)(7.5,20)(6.5,20)(6,19.5)(6,18.5)
	\pspolygon[fillstyle = solid,fillcolor=gray](10.5,18)(11.5,18)(12,18.5)(12,19.5)(11.5,20)(10.5,20)(10,19.5)(10,18.5)
	\psellipse[fillstyle = solid,fillcolor=lightgray](17,19)(3.5,1.5)
	\pspolygon[fillstyle = solid,fillcolor=gray](22.5,18)(23.5,18)(24,18.5)(24,19.5)(23.5,20)(22.5,20)(22,19.5)(22,18.5)
	\pspolygon[fillstyle = solid,fillcolor=gray](26.5,18)(27.5,18)(28,18.5)(28,19.5)(27.5,20)(26.5,20)(26,19.5)(26,18.5)
	\pspolygon[fillstyle = solid,fillcolor=gray](30.5,18)(31.5,18)(32,18.5)(32,19.5)(31.5,20)(30.5,20)(30,19.5)(30,18.5)
	\psline[linestyle=dashed](13,-4)(13,20)\psline[linestyle=dashed](21,-4)(21,20)
	\put(2,-3.5){$\Omega_-$}\put(18,-3.5){$\Omega_0$}\put(30,-3.5){$\Omega_+$}
	\end{pspicture}
\caption{Domain of propagation : typically $\rho=1$ in the white region, $\rho=2$ in the dark grey regions and $\rho=3$ in the light grey regions.}\label{fig:omega}
\end{center}
\end{figure}
\begin{remark}[Some extensions]\label{rem:extension_rho}~\\
	1. We can consider a more general medium of propagation than $\Omega=\R^2$. It could consist for example of $\R^2$ minus a periodic set of holes. The only assumption is that the periodicity property of $\Omega$ has to the be the same as $\rho$.\\
	2. Even if the spectral properties are different, the analysis and the method can be extended to coefficient defined by
	 \[
		\begin{array}{r|ll}
			\rho(x,y)=&\rho_p^-(x,y)&\text{in}\quad]-\infty,-a[\times\R\\[3pt]
					&\rho_0(x,y)&\text{in}\quad\Omega_0\\[3pt]
					&\rho_p^+(x,y)&\text{in}\quad]a,+\infty[\times\R.
				\end{array}
	\]
where $\rho_p^-$ and $\rho_p^+$ have the same properties than $\rho_p$ with the same period in the $y-$direction ($L_y$) and not necessarily the same in the $x$-direction ($L_x^-$ and $L_x^+$).
\end{remark}~\\
\begin{remark}[Some open questions]\label{rem:openquestions_rho}
The case where the line defect has not the same periodicity properties as the photonic crystal and the case where the line defect is not introduced along one of the direction of periodicity of the photonic crystal are out of the scope of the present paper.
\end{remark}~\\\\
The propagation model is a simple 2D space-(${\bfx}=(x,y)$) time harmonic scalar wave equation (corresponding for example to transverse electric (TE) mode electromagnetic wave in two dimensions)
\begin{equation}\label{eq:wave}\tag{$\mathcal{P}$}
	-\triangle w-\rho\,\omega^2\, w =0,\quad\text{in}\;\Omega,
\end{equation}
where $\omega\in \R^+$ is the frequency. In the rest of the article, $\omega^2$ will be considered as the spectral parameter.\\
\begin{remark}\label{rem:generalisation}
	The results developed in this article can be easily extended to more general elliptic operator $u\mapsto \nabla\cdot(\mu\nabla u)$ where $\mu$ is line perturbation of a periodic function.
\end{remark}~\\\\
Using the Floquet-Bloch theory, we could show that the spectrum of the operator $-\triangle/\rho$ could be deduced from the study of the guided modes. A guided mode of this problem is by definition a solution $w\neq 0$ to \eqref{eq:wave}  which can be written in the form
\begin{equation}\label{eq:mode}
	w(x,y)=v(x,y)\,e^{\imath\beta y}
\end{equation}
where, in full generality
\begin{itemize}
	\item $\beta$ - called the quasi period - is in $]-\pi/L_y,\pi/L_y[$;
	\item $v$ is periodic in the $y-$direction with period $L_y$ and
	\[
		v\big|_{B}\in H^1(B)\quad\text{where}\;B=\R\times ]-\frac{L_y}{2},\frac{L_y}{2}[
	\]
\end{itemize}
We will identify in the following any periodic function in $\Omega$ to its restriction to the period $B$.~\\
\begin{remark}\label{rem:beta}
In full generality, the coefficient $\beta$ is in $\R$ but it is enough to consider it only in $]-\pi/L_y,\pi/L_y[$. Indeed, it is easy to see that if there exists a mode for $\beta\in\R$, 
	\[
		w(x,y,t)=v(x,y)\,e^{\imath\beta y},\;\text{with $v$ $L_y$-periodic and }\omega\in \R^+,
	\]
it corresponds to a mode for $\beta\pm\pi/L_y$ of the form
	\[
		w(x,y)=\tilde{v}(x,y)\,e^{\imath(\beta\pm\pi/L_y) y}\;\text{with} \; \tilde{v}(x,y) = v(x,y) e^{\mp\imath\pi/L_y y}\text{ which is $L_y$-periodic}.
	\]
\end{remark}
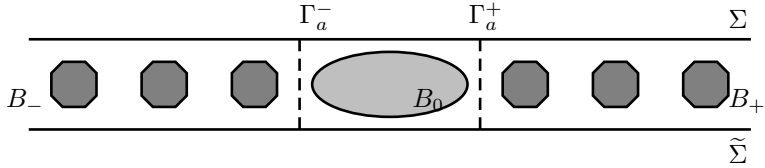
\begin{figure}[htbp]
\begin{center}
	\psset{unit=0.3cm} \psset{linewidth=1pt}
	\begin{pspicture}(-5,-5)(50,10)
	\pspolygon[fillstyle = solid,fillcolor=gray](2.5,2)(3.5,2)(4,2.5)(4,3.5)(3.5,4)(2.5,4)(2,3.5)(2,2.5)
	\pspolygon[fillstyle = solid,fillcolor=gray](6.5,2)(7.5,2)(8,2.5)(8,3.5)(7.5,4)(6.5,4)(6,3.5)(6,2.5)
	\pspolygon[fillstyle = solid,fillcolor=gray](10.5,2)(11.5,2)(12,2.5)(12,3.5)(11.5,4)(10.5,4)(10,3.5)(10,2.5)
	\psellipse[fillstyle = solid,fillcolor=lightgray](17,3)(3.5,1.5)
	\pspolygon[fillstyle = solid,fillcolor=gray](22.5,2)(23.5,2)(24,2.5)(24,3.5)(23.5,4)(22.5,4)(22,3.5)(22,2.5)
	\pspolygon[fillstyle = solid,fillcolor=gray](26.5,2)(27.5,2)(28,2.5)(28,3.5)(27.5,4)(26.5,4)(26,3.5)(26,2.5)
	\pspolygon[fillstyle = solid,fillcolor=gray](30.5,2)(31.5,2)(32,2.5)(32,3.5)(31.5,4)(30.5,4)(30,3.5)(30,2.5)
	\psline(1,1)(33,1)\psline(1,5)(33,5)\psline[linestyle=dashed](21,1)(21,5)
	\psline[linestyle=dashed](13,1)(13,5)
	\put(0,2){$B_-$}\put(32,-0.5){$\widetilde{\Sigma}$}\put(20.5,5.7){$\Gamma^+_a$}\put(13,5.7){$\Gamma^-_a$}\put(32,5.5){$\Sigma$}\put(18,2){$B_0$}\put(32,2){$B_+$}
	\end{pspicture}
\caption{The band $ B$ - one period in the $y$-direction of the domain $\Omega$ represented in Figure~\ref{fig:omega}.}\label{fig:bandB}
\end{center}
\end{figure}
Replacing the expression \eqref{eq:mode} in \eqref{eq:wave} we see easily that finding the guided modes corresponds to finding couples $(\omega^2,\beta)$ such that there exists
$v\in H^1(B)$, $v\neq 0$ solution of
\begin{equation}\label{eq:pb_modes1}
\begin{array}{|l}
	\dsp -\frac{1}{\rho}\triangle v - \frac{2\imath\beta}{\rho} \nabla v + \frac{\beta^2}{\rho} v= \omega^2v,\quad\text{in}\;B\\[3pt]
	\dsp v\big|_{\Sigma} = v\big|_{\widetilde{\Sigma}},\quad
	\dsp\partial_y\, v\big|_{\Sigma} = \partial_y\, v\Big|_{\widetilde{\Sigma}}
\end{array}
\end{equation}
where (see Figure~\ref{fig:bandB}) $
	\Sigma =\R\times\big\{\frac{L_y}{2}\big\}$ and 	$
		\widetilde{\Sigma} =\R\times\big\{- \frac{L_y}{2}\big\}.$ 
\\\\
We can rewrite the previous problem using $u(x,y) = v(x,y)e^{\imath\beta y}$ and then finding the guided modes is equivalent to finding couples $(\omega^2,\beta)$ such that there exists $u\in H^1(B)$, $u\neq 0$ solution of
\begin{equation}\label{eq:pb_modes2}
\begin{array}{|l}
	\dsp -\frac{1}{\rho}\triangle u = \omega^2u,\quad\text{in}\;B\\[3pt]
	\dsp u\lrres_{\Sigma} = e^{\imath \beta}\,u\lrres_{\widetilde{\Sigma}},\quad
	\dsp\partial_y\, u\lrres_{\Sigma} = e^{\imath \beta}\,\partial_y\, u\lrres_{\widetilde{\Sigma}}
\end{array}
\end{equation}
To characterize the guided modes, there exist two different formulations : the $\omega-$formulation in which $\omega$ is fixed and $\beta$ is looked for and the $\beta-$formulation in which $\beta$ is fixed and $\omega$ is looked for. The first one leads to a quadratic eigenvalue problem in the band $B$ (it is obvious with the formulation \eqref{eq:pb_modes1} of the problem) and the second one to an eigenvalue problem in the band $B$.
\\\\
The classical approach for solving this problem set in an unbounded domain is the Supercell Method which is based on the exponentially decreasing property of the mode in the $x-$ direction (see Theorem \ref{th:decay_eigenvec} where we remind this result) and consists in truncating the band $B$ far enough. One (artificial) parameter of the method is then the position of the truncature position. The main advantage is that this approach leads to an eigenvalue problem (for the $\beta$-formulation) or a quadratic one (for the $\omega-$formulation) set on a bounded domain (the truncated one). The main drawback of this strategy relies on the increase of the computational cost when a mode is not well confined. 
\\\\
Here we propose a novel method  based on a DtN approach, which is originally developed for scattering problems. This method offers a rigorously justified alternative to existing methods.This approach allows us to reduce the numerical computation to a small neighborhood of the defect independently from the confinement of the computed guided mode. The main advantage is that the method is exact so we improve the accuracy for non well-confined guided modes and could deduce the behavior of the dispersive curves - $\beta\mapsto\omega(\beta)$ where $(\beta,\omega(\beta))$ is solution of \eqref{eq:pb_modes1} - near the edges of the gaps. The main drawback is that the reduction of the problem leads to a nonlinear eigenvalue problem, of a fixed point nature. To simplify the presentation, we choose here the $\omega-$formulation but the method extends to the other formulation - which could be more adapted for dispersive media for example. $\rho=\rho(\omega)$).
\\\\
The article is organized as follows. We will remind in Section 3 the spectral properties of the locally perturbed periodic operator involved in the study. Section 4 deals with the DtN approach and present the non linear eigenvalue problem which has to be solved to compute the guided modes. It is the most important and original part of the article. Section 5 is devoted to numerical results and Section 6 to give some conclusions and perspectives. 

\section{Spectral theory results for the problem} 
\label{sec:spectral_theory_results_for_the_problem}

\noindent We focus on the guided modes defined in \eqref{eq:pb_modes2} and then reduce the problem to the following one : \\\\
$\dsp\text{For any }\beta\in ]-\frac{\pi}{L_y},\frac{\pi}{L_y}[,$
 \begin{equation}\label{eq:eigpb}\tag{$\mathcal{E}^\beta$}
\dsp \text{find}\;\omega^2\in \R^+,\quad
		\text{s.t.}\;\exists u \in H^1(B),\;u\neq 0,\quad
 \dsp A(\beta)u=\omega^2 u
 \end{equation}
where 
\[
\begin{array}{|l}
	\dsp A(\beta) = -\frac{1}{\rho}\triangle\\
	\dsp D(A(\beta)) = \big\{u\in H^1(\triangle,B),\quad
	u|_{\Sigma}=e^{\imath\beta}u|_{\widetilde{\Sigma}}\quad\text{and}\quad
	\partial_y u|_{\Sigma}=e^{\imath\beta}\partial_y u|_{\widetilde{\Sigma}} \big\}.
\end{array}
\]
with $H^1(\triangle, B) = \{u \in H^1(B),\; \Delta u \in L^2(B)\}$.\\\\The problem is then reduced to the determination of eigenvalues of the locally perturbed periodic operator $A(\beta)$, for any $\beta\in ]-{\pi}/{L_y},{\pi}/{L_y}[.$\\\\
We introduce for the following the operator with perfectly periodic coefficients
\begin{equation}\label{eq:A_p_ky}
	A_p(\beta) = -\frac{1}{\rho_p}\,\triangle,\quad D\lrpar{A_p(\beta)} = D(A(\beta))
\end{equation}
Using \cite{Kuchment:1993,Figotin:1997} and the Weyl's theorem \cite[Vol. IV, Theorem XIII-14]{Reed:1972}, we describe in the following proposition the essential spectrum of the operator $A(\beta)$.\\\\
\begin{proposition}[Essential spectrum of $A(\beta)$]\label{prop:essential_spectrum}
The operator $A(\beta)$ is selfadjoint in $H=L^2(B,\rho dxdy)$, positive and its essential spectrum, denoted $\sigma_{ess}(\beta)$, satisfies
\[
	\sigma_{ess}(\beta)=\sigma\left(A_p(\beta)\right)=\R\setminus\bigcup_{1\leq n\leq N(\beta)} ]a_n(\beta),b_n(\beta)[
\]	
where $A_p(\beta)$ is the operator with periodic coefficient defined by \eqref{eq:A_p_ky}, $N(\beta)$ ($0\leq N(\beta)\leq +\infty$) is the number of gaps and for any $n$, $0\leq a_n(\beta)<b_n(\beta)$. 
\end{proposition}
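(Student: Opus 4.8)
The plan is to study $A(\beta)$ through its associated Dirichlet form and then to compare it with the periodic operator $A_p(\beta)$ by a resolvent-difference argument, the gap structure itself coming from a Floquet--Bloch analysis in the $x$-variable.

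First I would settle selfadjointness and nonnegativity by the form method. Integrating by parts (Green's formula) in $H=L^2(B,\rho\,dxdy)$, one checks that for $u,v$ in the form domain $V(\beta):=\{u\in H^1(B): u|_\Sigma=e^{\imath\beta}u|_{\widetilde\Sigma}\}$ the boundary contributions on $\Sigma$ and $\widetilde\Sigma$ cancel, thanks to the quasi-periodicity condition, so that $(A(\beta)u,u)_H=a(\beta;u,u)$ with
\[
a(\beta;u,v)=\int_B\nabla u\cdot\overline{\nabla v}\,dxdy.
\]
This Hermitian form is nonnegative, and since $\rho_-\le\rho\le\rho_+$ the norm $\big(a(\beta;u,u)+\|u\|_H^2\big)^{1/2}$ is equivalent to the $H^1(B)$ norm; moreover $V(\beta)$ is closed in $H^1(B)$, being the kernel of the continuous trace map $u\mapsto u|_\Sigma-e^{\imath\beta}u|_{\widetilde\Sigma}$. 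Hence $a(\beta;\cdot,\cdot)$ is densely defined, closed and nonnegative, and Kato's representation theorem produces a unique nonnegative selfadjoint operator whose domain is exactly $D(A(\beta))$ (the matching of normal derivatives across $\Sigma,\widetilde\Sigma$ appearing as the natural boundary condition). The same construction with $\rho_p$ yields $A_p(\beta)$, and both operators share the form domain $V(\beta)$ and the operator domain $D(A(\beta))=D(A_p(\beta))$.

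Second, and this is the core, I would prove $\sigma_{ess}(A(\beta))=\sigma_{ess}(A_p(\beta))$ by showing that the resolvent difference is compact and then invoking Weyl's theorem. On the common domain one has $A_p(\beta)-A(\beta)=m\,\triangle$ with $m=\frac1\rho-\frac1{\rho_p}\in L^\infty(B)$ supported in the \emph{bounded} set $\overline{\Omega_0\cap B}$ (the periodicity in $y$ is essential here: the line defect meets a single band $B$ in a bounded set). For $z\notin\R$ the resolvent identity gives
\[
(A(\beta)-z)^{-1}-(A_p(\beta)-z)^{-1}=(A(\beta)-z)^{-1}\,m\,\triangle\,(A_p(\beta)-z)^{-1}.
\]
Since $\triangle(A_p(\beta)-z)^{-1}=-\rho_p\big(I+z(A_p(\beta)-z)^{-1}\big)$ is bounded on $L^2(B)$, it suffices to show that $(A(\beta)-z)^{-1}m$ is compact; equivalently, by taking adjoints and using that $m$ is real, that $m\,(A(\beta)-\bar z)^{-1}$ is compact. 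This last operator factors as $L^2(B)\to H^1(B)$ (the resolvent, bounded), followed by restriction to the bounded set $\overline{\Omega_0\cap B}$ and multiplication by the bounded, compactly supported $m$; Rellich's theorem (compact embedding of $H^1$ into $L^2$ on that bounded set) then gives compactness. I expect this step to be the main obstacle: the clean factorization through a bounded region, the verification of the domain equality $D(A(\beta))=D(A_p(\beta))$, and the precise mapping properties all have to be justified, whereas the rest is bookkeeping.

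Finally I would identify $\sigma_{ess}(A_p(\beta))$ with the full spectrum $\sigma(A_p(\beta))$ and read off the band structure. As $A_p(\beta)$ still has coefficients periodic in $x$ with period $L_x$, a Floquet--Bloch transform in the $x$-variable decomposes it as a direct integral $\int^\oplus A_p(\beta,k_x)\,dk_x$ over $k_x\in[-\pi/L_x,\pi/L_x]$, each fibre acting on the bounded cell $(-L_x/2,L_x/2)\times(-L_y/2,L_y/2)$ with compact resolvent, hence with a discrete nondecreasing sequence of eigenvalues $\lambda_n(\beta,k_x)\ge0$. Perturbation theory gives the continuity of $k_x\mapsto\lambda_n(\beta,k_x)$, so that
\[
\sigma(A_p(\beta))=\bigcup_{n\ge1}\big[\min_{k_x}\lambda_n(\beta,k_x),\,\max_{k_x}\lambda_n(\beta,k_x)\big],
\]
a locally finite union of compact intervals (the bands). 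Every point of this set has infinite multiplicity, so $A_p(\beta)$ has no discrete spectrum and $\sigma_{ess}(A_p(\beta))=\sigma(A_p(\beta))$. Its complement in $[0,+\infty)$ is an at most countable union of disjoint open gaps $]a_n(\beta),b_n(\beta)[$ with $0\le a_n(\beta)<b_n(\beta)$, the nonnegativity of the $\lambda_n$ forcing $a_n(\beta)\ge0$. Combining this with the Weyl step yields $\sigma_{ess}(\beta)=\sigma(A_p(\beta))=\R\setminus\bigcup_{1\le n\le N(\beta)}]a_n(\beta),b_n(\beta)[$, as claimed.
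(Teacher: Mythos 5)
Your proposal is correct and follows essentially the same route as the paper: selfadjointness and positivity, then Weyl's theorem applied after showing the resolvent difference of $A(\beta)$ and $A_p(\beta)$ is compact via the Rellich embedding $H^1(B_0)\hookrightarrow L^2(B_0)$ on the bounded defect region, and finally the Floquet--Bloch decomposition of $A_p(\beta)$ into fiber operators $A_p(\beta,k)$ with compact resolvent and continuous dispersion curves to obtain the band--gap structure. The only differences are presentational: you justify selfadjointness by the form method and spell out the factorization of the resolvent difference, steps the paper asserts or delegates to references.
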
~\\
\begin{proof}
The operator $A_p(\beta)$ is selfadjoint in $L^2(B,\rho_p dxdy)$ and positive, the operator $A(\beta)$ is closed in $L^2(B,\rho_p dxdy)$ (and obviously selfadjoint and positive in $H$). Moreover, $(A(\beta)+1)^{-1}-(A_p(\beta)+1)^{-1}$ is compact (by the compactness of the embedding of $H^1(B_0) \hookrightarrow L^2(B_0)$ where $B_0=B\cap \Omega_0$). Using \cite[Vol. IV, Theorem XIII-14]{Reed:1972}, the essential spectrum of $A(\beta)$ coincide with the essential spectrum of $A_p(\beta)$
	\begin{equation}\label{eq:weyl}
		\sigma_{ess}\left(\beta\right) = \sigma_\text{ess}\left(A_p(\beta)\right).
	\end{equation}
	Moreover, one of the main result of the Floquet-Bloch theory is (see \cite{Kuchment:1993} for more details) that the spectrum of $A_p(\beta)$, $\sigma\left(A_p(\beta)\right)$ is reduced to its essential spectrum, $\sigma_\text{ess}\left(A_p(\beta)\right)$ and is given by
	\begin{equation}\label{eq:spectre_Ap}
		\sigma\left(A_p(\beta)\right) = \sigma_\text{ess}\left(A_p(\beta)\right) = \bigcup_{k\in]-\pi/L_x,\pi/L_x]} \sigma\left(A_p(\beta,k)\right)
	\end{equation}
	where 
	\[\begin{array}{|l}
		\dsp A_p(\beta,k) = -\frac{1}{\rho_p}\,\triangle,\\[5pt] \dsp D\lrpar{A_p(\beta,k)} = \lracco{u\in H^1(\triangle, \calc),\;
			\begin{array}{|l}
				\dsp u\lrres_{x=L_x/2} = e^{\imath kL_x}\,u\lrres_{x=-L_x/2},\\
				\dsp\partial_x u\lrres_{x=L_x/2} = e^{\imath kL_x}\,\partial_x u\lrres_{x=-L_x/2}\\
				\dsp u\lrres_{y=L_y/2} = e^{\imath \beta L_y}\,u\lrres_{y=-L_y/2},\\
				\dsp\partial_y u\lrres_{y=L_y/2} = e^{\imath \beta L_y}\,\partial_yu\lrres_{y=-L_y/2}
			\end{array}
		}.
	\end{array}
	\]
	with $\calc = ]-L_x/2,L_x/2[\times]-L_y/2,L_y/2[$ and $H^1(\triangle, \calc) = \{u \in H^1(\calc),\; \Delta u \in L^2(\calc\}$. \\\\
	Moreover, for any $k$ in $]-\pi/L_x,\pi/L_x]$, $A_p(\beta,k)$ is a self-adjoint positive operator with compact resolvent so its spectrum is purely discrete
	\[
		0<\omega_1(\beta,k)\leq \omega_2(\beta,k)\leq \ldots \leq \omega_n(\beta,k)<\ldots\quad\text{with}\;\lim_{n\rightarrow +\infty}\omega_n(\beta,k) = +\infty
	\]
	and we can find $\lrpar{e_n(\beta,k)}$ a corresponding family of eigenvectors which is an Hilbert basis of $L^2(\calc)$.
	\\\\
	Using the min-max principle to $\tilde{A}_p(\beta,k)$, we could show that the so called dispersive curve
	\begin{equation}\label{eq:omega_n_cont}
		(\beta,k)\mapsto\omega_n(\beta,k)\;\text{is continuous.}
	\end{equation}
	Finally, we deduce from \eqref{eq:weyl}, \eqref{eq:spectre_Ap} and \eqref{eq:omega_n_cont} that
	\[
	\sigma_{ess}(\beta)=\sigma\left(A_p(\beta)\right)=\bigcup_{n\in\N}\omega_n([-\pi,\pi[,\beta)
	\]
\end{proof}
~\\\\
\begin{remark}
In Section \ref{sec:ess_spectrum}, we will give another characterization of the essential spectrum with a by-product of the method.
\end{remark}~\\\\
As described in the Introduction, only sufficient conditions of existence are known and finiteness of the number of gaps is conjectured in the general case and proven for particular cases. 
Let us suppose in the following that at least one gap exists ($N(\beta)\geq 1$). Using the theory of selfadjoint operators (\cite{Reed:1972}), we deduce that\\
\begin{proposition}[Discrete spectrum of $A(\beta)$]
	The spectrum of $A(\beta)$ inside the gaps consists only of isolated eigenvalues of finite multiplicity, which can accumulate only at the edges of the gap. 
\end{proposition}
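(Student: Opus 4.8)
The plan is to deduce the statement directly from the abstract structure of the spectrum of a selfadjoint operator, combined with the identification of the essential spectrum obtained in Proposition~\ref{prop:essential_spectrum}. Recall that for any selfadjoint operator the spectrum splits as the disjoint union of the essential spectrum and the \emph{discrete spectrum} $\sigma_{disc}$, the latter being by definition the set of isolated points of the spectrum that are eigenvalues of finite multiplicity (see \cite{Reed:1972}). Thus the whole content of the statement is to observe that, inside a gap of $\sigma_{ess}(\beta)$, every spectral point must belong to the discrete part, and then to rule out interior accumulation.

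First I would fix $\beta$ and a gap $]a_n(\beta),b_n(\beta)[$ of the essential spectrum. By Proposition~\ref{prop:essential_spectrum} this open interval is, by construction, a connected component of the complement $\R\setminus\sigma_{ess}(\beta)$, hence disjoint from $\sigma_{ess}(\beta)$. Consequently, any $\lambda\in\sigma(A(\beta))$ lying in $]a_n(\beta),b_n(\beta)[$ satisfies $\lambda\notin\sigma_{ess}(\beta)$, and therefore $\lambda$ belongs to $\sigma(A(\beta))\setminus\sigma_{ess}(\beta)=\sigma_{disc}(A(\beta))$. By the very definition of the discrete spectrum, $\lambda$ is then an isolated point of $\sigma(A(\beta))$ and an eigenvalue of finite multiplicity, which is the first assertion.

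It then remains to control the accumulation of these eigenvalues, and here I would argue by contradiction. If a sequence of distinct eigenvalues in the gap accumulated at some interior point $\lambda_0\in\,]a_n(\beta),b_n(\beta)[$, then $\lambda_0$ would itself lie in $\sigma(A(\beta))$, the spectrum being closed, but it would fail to be isolated; by the dichotomy recalled above it would then belong to $\sigma_{ess}(\beta)$. This contradicts $\lambda_0\in\,]a_n(\beta),b_n(\beta)[\,\subset\R\setminus\sigma_{ess}(\beta)$. Since $\sigma_{ess}(\beta)$ is closed, the only possible accumulation points lying in the closure of the gap are its endpoints $a_n(\beta)$ and $b_n(\beta)$, which do belong to $\sigma_{ess}(\beta)$; this is exactly the ``accumulation only at the edges'' statement.

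I do not expect any genuine obstacle: the result is a soft consequence of the Weyl decomposition of the spectrum of a selfadjoint operator, the selfadjointness and positivity of $A(\beta)$ having already been established in Proposition~\ref{prop:essential_spectrum}. The only point requiring a little care is to state precisely the convention for ``essential'' and ``discrete'' spectrum so that the identity $\sigma=\sigma_{ess}\sqcup\sigma_{disc}$ is exactly the one being used, but this is purely a matter of invoking the appropriate statement in \cite{Reed:1972}.
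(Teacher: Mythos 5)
Your proof is correct and follows essentially the same route as the paper, which simply deduces the proposition from ``the theory of selfadjoint operators'' with a citation to \cite{Reed:1972}; you have merely spelled out the standard Weyl dichotomy $\sigma = \sigma_{ess} \sqcup \sigma_{disc}$ and the closedness argument that the paper leaves implicit. No gaps: the dichotomy holds for selfadjoint operators, and your contradiction argument for interior accumulation points is the standard one.
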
~\\\\
One should ask if there is a way to ensure the rise of at least one eigenvalue in gaps of $A(\beta)$. Figotin and Klein have given in \cite{Figotin:1997} sufficient condition on the defect (supposed homogeneous) and the gaps to introduce eigenvalues in any gaps of $A(\beta)$. Using asymptotic arguments, Nazarov and co-workers \cite{Nazarov:2011} have studied sufficient conditions for existence of eigenvalue for elastic waveguides.
\\\\
We are interested now, in characterizing and then computing the eigenvalues $(\lambda_m(\beta))_m$,
\[
	0\leq \lambda_1(\beta)\leq \lambda_2(\beta)\leq\ldots\leq\lambda_{M(\beta)}(\beta),\quad 0\leq M(\beta)\leq +\infty
\]
if they exist (we suppose then $M(\beta)\geq 1$), which are in the gaps of the essential spectrum (see \cite{Bonnet:2001} -optical waveguides- and \cite{Kuchment:2003} -photonic crystal waveguides- for existence of eigenvalues inside gaps) :
~\\
\begin{proposition}[Properties of each $\lambda_m(\beta)$]
	The dispersion curves $\beta\mapsto\lambda_m(\beta)$ are $2\pi/L_y$-periodic, even and continuous.
\end{proposition}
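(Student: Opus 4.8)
The plan is to separate the three assertions: periodicity and evenness follow from elementary symmetries of the family $A(\beta)$, whereas continuity is the only point requiring analytic perturbation theory.

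For periodicity and evenness I would argue at the level of the operators. The differential expression $-\tfrac1\rho\triangle$ does not depend on $\beta$, and $\beta$ enters $A(\beta)$ only through the quasi-periodicity phase appearing in the boundary conditions that define $D(A(\beta))$. That phase is $2\pi/L_y$-periodic in $\beta$, so $A(\beta+2\pi/L_y)=A(\beta)$ as operators (identical action and identical domain); hence the whole spectrum, and in particular each $\lambda_m$, is $2\pi/L_y$-periodic. For evenness, let $C:u\mapsto\bar u$ be complex conjugation, an antiunitary involution of $H=L^2(B,\rho\,dxdy)$ since $\rho$ is real. Then $C$ commutes with $-\tfrac1\rho\triangle$, and conjugating the boundary conditions replaces the quasi-periodicity phase at $\beta$ by its conjugate, i.e. by the phase at $-\beta$; therefore $C$ maps $D(A(\beta))$ onto $D(A(-\beta))$ and $CA(\beta)C=A(-\beta)$. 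An antiunitary equivalence preserves the spectrum with multiplicities, and by Proposition~\ref{prop:essential_spectrum} it preserves $\sigma_{ess}$ as well, so the gaps coincide and the ordered eigenvalues satisfy $\lambda_m(-\beta)=\lambda_m(\beta)$.

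For continuity the obstacle is that $D(A(\beta))$ itself moves with $\beta$, which is awkward for perturbation theory. I would remove this by the gauge transformation $U_\beta:u\mapsto e^{-\imath\beta y}u$, which is unitary on $H$ and conjugates $A(\beta)$ into the operator of the $v$-formulation \eqref{eq:pb_modes1},
\[
\tilde A(\beta)=-\frac1\rho\triangle-\frac{2\imath\beta}\rho\,\partial_y+\frac{\beta^2}\rho,
\]
now acting on the fixed, $\beta$-independent domain of $L_y$-periodic functions. Since $U_\beta$ is unitary, $\tilde A(\beta)$ is self-adjoint and isospectral to $A(\beta)$, so it suffices to study $\tilde A(\beta)$. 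On the fixed domain the map $\beta\mapsto\tilde A(\beta)$ is a polynomial of degree two in $\beta$, with $-\tfrac{2\imath\beta}\rho\partial_y$ relatively bounded with respect to $\tilde A(0)$ with arbitrarily small relative bound and $\tfrac{\beta^2}\rho$ bounded; hence $\{\tilde A(\beta)\}$ is a self-adjoint holomorphic family of type (A) in Kato's sense.

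I would then conclude by a Riesz-projection argument. Fix $\beta_0$ and an eigenvalue $\lambda_m(\beta_0)$ of multiplicity $p$ lying strictly inside a gap, and choose a small circle $\Gamma$ enclosing it but no other point of the spectrum and staying away from the band edges. The spectral projection $P(\beta)=\tfrac1{2\pi\imath}\oint_\Gamma(\tilde A(\beta)-z)^{-1}\,dz$ is then analytic in $\beta$ near $\beta_0$ with constant rank $p$, so exactly $p$ eigenvalues (counted with multiplicity) remain inside $\Gamma$ and converge to $\lambda_m(\beta_0)$, which yields continuity of the ordered family $\beta\mapsto\lambda_m(\beta)$. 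The step I expect to be most delicate is keeping $\Gamma$ separated from the continuous spectrum: this requires the band edges $a_n(\beta),b_n(\beta)$ to vary continuously with $\beta$, which is exactly the continuity \eqref{eq:omega_n_cont} of the dispersion surfaces established in Proposition~\ref{prop:essential_spectrum} and which I would invoke here. Branch crossings are harmless for the ordered eigenvalues, but an eigenvalue may be absorbed into $\sigma_{ess}(\beta)$ at a band edge; the statement must therefore be read on the relatively open set of $\beta$ for which $\lambda_m(\beta)$ stays in the gap, and it is precisely the continuity of the edges that guarantees $\Gamma$ can be chosen uniformly on a neighborhood of each such $\beta_0$.
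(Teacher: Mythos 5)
Your proposal is correct and follows essentially the same route as the paper: periodicity from the identity $A(\beta+2\pi/L_y)=A(\beta)$ (action and domain), evenness from complex conjugation mapping $D(A(\beta))$ onto $D(A(-\beta))$, and continuity from Kato's analytic perturbation theory. The only difference is one of detail: where the paper simply cites the analyticity of $A(\beta)$ in $\beta$, you make this precise by gauging to the fixed-domain $v$-formulation \eqref{eq:pb_modes1} to obtain a type~(A) holomorphic family and then running the Riesz-projection argument, including the (correct) caveat about eigenvalues reaching the gap edges.
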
~\\
\begin{proof}
	\begin{itemize}
		\item $\beta\mapsto\lambda_m(\beta)$ is $2\pi/L_y$-periodic : by definition 
		\[
			A(\beta+2\pi/L_y) = A(\beta)\quad\text{and}\quad D\left(A(\beta+2\pi/L_y)\right) = D\left(A(\beta)\right).
		\]
		\item $\beta\mapsto\lambda_m(\beta)$ is even because
		\[
			A(\beta) = A(-\beta)\quad\text{and}\quad \overline{D\left(A(\beta)\right)} = D\left(A(-\beta)\right).
		\]
		and $A(\beta)$ is a self-adjoint positive operator.
		\item The continuity with respect to $\beta$ of the eigenvalues sorted in ascended order is due to the analyticity of the operator $A(\beta)$ with respect to $\beta$ \cite{kato}. 
	\end{itemize}
\end{proof}~\\\\
~\noindent We deduce in particular that it is sufficient to study the dispersive curves for $\beta\in [0,\pi/L_y]$.\\\\
We study now the properties of the eigenvectors associated to the eigenvalues. By definition, they are in $L^2(B)$ but we can be more precise : they decay exponentially fast far from the defect, with a rate depending on the distance from the eigenvalue to the edges of the gap. It is a result based on \cite{Figotin:1996c} and shown in \cite{Figotin:1997}.\\
\begin{theorem}[Exponential decay of the eigenvectors]\label{th:decay_eigenvec}
	Let $\beta\in ]-\pi/L_y,\pi/L_y]$. For any eigenvalue $\lambda_n(\beta)$ of $A(\beta)$, the associated eigenvectors $\varphi_n(\beta;\cdot)$ satisfies
	\begin{multline}
		\forall n\in\llbracket1,M(\beta)\rrbracket,\quad\exists C_1,C_2>0,\;\exists a>0,\forall (x,y)\in B,\;\abs{x}<a,\quad \\\dsp\abs{\varphi_n(\beta;x,y)}\leq \frac{C_1}{\text{dist}\left(\lambda_n(\beta),\sigma_{ess}(A(\beta))\right)}e^{\left(-C_2\,\text{dist}\left(\lambda_n(\beta),\sigma_{ess}(A(\beta))\right)\abs{x}\right)}
	\end{multline}
\end{theorem}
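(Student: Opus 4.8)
The plan is to use the Combes--Thomas method, exploiting that $\lambda:=\lambda_n(\beta)$ lies in a gap, so that $d:=\text{dist}\lrpar{\lambda,\sigma_{ess}(A(\beta))}>0$ and $\lambda\notin\sigma(A_p(\beta))=\sigma_{ess}(A(\beta))$ by Proposition~\ref{prop:essential_spectrum}. First I would transfer the eigenvalue equation to the purely periodic operator. Since $A(\beta)$ and $A_p(\beta)$ have the same principal part and, crucially, the same domain, the eigenvector $\varphi:=\varphi_n(\beta;\cdot)\in D(A(\beta))=D(A_p(\beta))$ is admissible for $A_p(\beta)$, and from $-\rho^{-1}\triangle\varphi=\lambda\varphi$ one gets $-\rho_p^{-1}\triangle\varphi=\lambda(\rho/\rho_p)\varphi$, hence
\[
	(A_p(\beta)-\lambda)\,\varphi=\lambda\lrpar{\frac{\rho}{\rho_p}-1}\varphi=:f .
\]
Because $\rho=\rho_p$ outside the defect, $f$ is supported in the bounded set $B_0=B\cap\Omega_0\subset\{\abs{x}\le a\}$ and satisfies $\norm{f}_{L^2(B)}\le C\,\norm{\varphi}_{L^2(B)}$. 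As $\lambda$ is in the resolvent set of $A_p(\beta)$, I may write $\varphi=(A_p(\beta)-\lambda)^{-1}f$, which reduces everything to a weighted bound on the periodic resolvent applied to a source localized near the defect.

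The core step is a Combes--Thomas estimate for $A_p(\beta)$. Let $\phi(x,y)=\chi(x)$ be a smooth function with $\abs{\chi'}\le 1$ that coincides with $\abs{x}$ for $\abs{x}\ge a$; since $\phi$ depends on $x$ only, conjugation by $e^{\alpha\phi}$ preserves the quasi-periodic conditions in $y$ defining $D(A_p(\beta))$. A direct computation gives $A_{p,\alpha}(\beta):=e^{\alpha\phi}A_p(\beta)e^{-\alpha\phi}=A_p(\beta)+\alpha T_1+\alpha^2 T_2$, where $T_1\sim-2\rho_p^{-1}\nabla\phi\cdot\nabla$ is first order and $T_2$ is bounded multiplication. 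Writing
\[
	A_{p,\alpha}(\beta)-\lambda=(A_p(\beta)-\lambda)\,\bigl(I+(A_p(\beta)-\lambda)^{-1}(\alpha T_1+\alpha^2 T_2)\bigr),
\]
I would show, using that $T_1$ is relatively $A_p(\beta)^{1/2}$-bounded with $\abs{\nabla\phi}\le 1$ and that $\norm{A_p(\beta)^{1/2}(A_p(\beta)-\lambda)^{-1}}=\mathcal{O}(1/d)$ on a bounded gap, that the perturbation has norm $<1/2$ once $\alpha$ is taken proportional to $d$, say $\alpha=C_2\,d$. A Neumann series then yields
\[
	\bigl\|e^{\alpha\phi}(A_p(\beta)-\lambda)^{-1}e^{-\alpha\phi}\bigr\|_{\mathcal{L}(L^2(B))}\le\frac{C}{d}.
\]

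Applying this to $\varphi=(A_p(\beta)-\lambda)^{-1}f$ and using that $f$ is supported where $\phi\le a$, so $\norm{e^{\alpha\phi}f}_{L^2}\le e^{\alpha a}\norm{f}_{L^2}\le C\,\norm{\varphi}_{L^2}$, gives the weighted bound $\norm{e^{\alpha\phi}\varphi}_{L^2(B)}\le (C_1/d)\,\norm{\varphi}_{L^2(B)}$. It remains to upgrade this integral estimate to the pointwise one. For $(x_0,y_0)\in B$ with $\abs{x_0}>a$, the eigenfunction solves $-\triangle\varphi=\lambda\rho\varphi\in L^2$ on a fixed neighborhood; since the principal part is the plain Laplacian, interior $H^2$ regularity holds even though $\rho$ is merely $L^\infty$, and in dimension two $H^2\hookrightarrow C^0$, so $\abs{\varphi(x_0,y_0)}\le C\,\norm{\varphi}_{L^2(Q)}$ on a unit cell $Q$ around the point. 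Bounding $\norm{\varphi}_{L^2(Q)}\le e^{-\alpha(\abs{x_0}-c)}\norm{e^{\alpha\phi}\varphi}_{L^2(Q)}$ and normalizing $\norm{\varphi}_{L^2(B)}=1$ produces exactly $\abs{\varphi_n(\beta;x,y)}\le (C_1/d)\,e^{-C_2 d\abs{x}}$ after relabeling constants.

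The hard part will be the weighted resolvent estimate with the correct \emph{linear} dependence of the decay rate on $d$: one must check that the first-order term $\alpha T_1$, which is only relatively $A_p(\beta)^{1/2}$-bounded rather than bounded, can nonetheless be absorbed by $(A_p(\beta)-\lambda)^{-1}$ with a constant uniform over the bounded gap, which is precisely what forces the admissible weight $\alpha$ to scale like $d$ and yields the rate $C_2\,d$. Everything else---the compact support of $f$, the transfer of the weighted estimate through the resolvent, and the elliptic upgrade to a pointwise bound---is standard, the last being painless here since the low regularity of $\rho$ enters only through the zeroth-order right-hand side and not the principal symbol.
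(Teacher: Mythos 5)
The paper itself gives no proof of Theorem~\ref{th:decay_eigenvec}: the result is quoted from Figotin and Klein (\cite{Figotin:1996c,Figotin:1997}), so there is no internal argument to compare against line by line. Your Combes--Thomas proof is correct in its essentials, and it is in fact the same mechanism that underlies those references: exponential decay of the resolvent of the \emph{unperturbed} periodic operator at energies in a gap, applied to the eigenfunction once it is rewritten as $\varphi=(A_p(\beta)-\lambda)^{-1}f$ with $f$ supported in the defect region. Two points of your adaptation are genuinely well suited to this band geometry and worth keeping explicit: the use of $D(A(\beta))=D(A_p(\beta))$ (true by the paper's definition \eqref{eq:A_p_ky}), which makes the reduction to a compactly supported source immediate, and the choice of a weight $\phi=\chi(x)$ depending only on $x$, so that conjugation preserves the quasi-periodicity conditions on $\Sigma,\widetilde{\Sigma}$ and hence the operator class. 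You also correctly isolate the crux — absorbing the first-order term with a constant $O(1/d)$, which is what forces $\alpha\sim d$ and yields the \emph{linear} rate $C_2 d$; this works precisely because the gap is bounded, giving $\norm{A_p(\beta)^{1/2}(A_p(\beta)-\lambda)^{-1}}\leq \sqrt{b_n}/d$ with $b_n$ the upper gap edge, and the same bound handles the perturbation whichever side of the resolvent it sits on (by self-adjointness and taking adjoints). A complete write-up should still fill in two standard technicalities you pass over: (i) the identity $e^{\alpha\phi}(A_p(\beta)-\lambda)^{-1}e^{-\alpha\phi}=(A_{p,\alpha}(\beta)-\lambda)^{-1}$ involves the unbounded weight $e^{\alpha\phi}$ and is properly justified by conjugating with smoothed truncations of $\phi$, for which all estimates are uniform, and then passing to the limit; (ii) the interior elliptic estimate near $y=\pm L_y/2$ requires first extending $\varphi$ quasi-periodically to $\R^2$, where it still solves the equation, so the cell $Q$ may cross the band boundary. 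Neither is a missing idea. Finally, note that your argument proves the bound for $\abs{x}>a$ (and trivially on all of $B$), which is what the statement intends; the condition $\abs{x}<a$ in the printed theorem is evidently a typo.
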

This property is exactly the one which encourages to use the Supercell method. Indeed, this method consists in approaching the eigenvalues and the associated eigenvectors by the ones of a truncated problem in the $x-$direction with periodic conditions. If the eigenvectors are exponentially decreasing in the $x-$direction, they will satisfy almost periodic conditions far enough from the perturbation and then be {\it almost-}eigenvectors for the {\it almost-}same eigenvalues of an operator defined from the truncated domain. The main advantage of this approach it that it leads to eigenvalue problem for a $\beta$-formulation and a quadratic one for a $\omega-$formulation, both of them set on a bounded domain. However, we could note several drawbacks.
\begin{enumerate}
	\item The essential spectrum of $A(\beta)$ has to be computed initially.
	\item For a fixed $\beta$, it seems important to have an estimation of the distance between the not yet computed eigenvalue and the essential spectrum of $A(\beta)$ to choose a relevant truncated domain. 
	\item The size of the truncated domain depends on $\beta$ and on the distance between the eigenvalue and the essential spectrum. If the eigenvalue approaches more and more the essential spectrum of $A(\beta)$ when $\beta$ varies, the corresponding eigenvector becomes less and less confined (less and less exponentially decreasing) and then the truncated domain has to be bigger and bigger. This can increase dramatically the computational cost.
	\item Because, this method is based on the exponential decay of the eigenvectors, it cannot describe the behavior of the dispersive curves (the eigenvalues as functions of $\beta$, $\beta\mapsto\lambda_n(\beta)$) near the edges of the essential spectrum. 
\end{enumerate}
We propose now a novel method based on a DtN approach which overcome these disadvantages because it is an exact method (in the sense which is precised in the next section). However, its main drawback is that it leads to a nonlinear eigenvalue problem of a fixed point nature.
\section{The non linear eigenvalue problem}
\subsection{The DtN approach} 
\label{sub:the_dtn_approach}

\noindent Let $\beta\in ]-\pi/L_y,\pi/L_y]$ and let us suppose from this point that $\alpha^2\notin \sigma_{ess}(\beta)$. We want here to write an equivalent problem to the problem \eqref{eq:eigpb} which is set on a bounded domain.\\\\
For the sequel, it is essential to introduce functional spaces appearing naturally in the study. Let us remind that $B^\pm = B\cap\Omega^\pm$, $\Sigma^\pm=\Sigma\cap\Omega^\pm$, $\widetilde{\Sigma}^\pm=\widetilde{\Sigma}\cap\Omega^\pm$ and $\Gamma^\pm_a=\{\pm a\}\times ]-L_y/2,L_y/2[$ (see Figure~ \ref{fig:bandB}).\\\\We start from smooth quasi-periodic functions in $B^\pm$:
\[
		C^\infty_{\beta}(B^\pm) = \Big\{u = \tilde{u}\big|_{B^\pm} ,\; \tilde{u} \in C^\infty(\Omega^\pm), \quad
\tilde{u}(x, y+L) = e^{\imath \beta L_y}\; \tilde{u}(x,y)
\Big\}.
\]
Let $H^1_{\beta}(B^\pm)$ be the closure of $C^\infty_{\beta}(B^\pm)$ 
in $H^1(B^\pm)$
\[
H^1_{\beta}(B^\pm) = \Big\{u \in H^1(B^\pm),\quad u\big|_{\Sigma^\pm} = e^{\imath \beta L_y}\;u\big|_{\widetilde{\Sigma}^\pm}\Big\}
\]
where in the last equation we have identified the spaces $H^{1/2}(\Sigma^\pm)$ and $H^{1/2}(\widetilde{\Sigma}^\pm)$. As $H^1_\beta(B^\pm)$ is a closed subspace of $H^1(B^\pm)$, we equip it with the norm of $H^1(B^\pm)$. Let $H^1_{\beta}(\triangle,B^\pm)$ be the closure of $
H^1(\triangle, B^\pm) = \{u \in H^1(B^\pm),\; \Delta u \in L^2(B^\pm)\}.
$
\[
H^1_{\beta}(\triangle,B^\pm) = \Big\{u \in H^1(\triangle,B^\pm) \cap H^1_{\beta}(B^\pm),	\quad
\dsp \frac{\partial u}{\partial y}\Big|_{\Sigma^\pm} = e^{\imath \beta L_y}\; \frac{\partial u}{\partial y}\Big|_{\widetilde{\Sigma}^\pm}
\Big\}.
\]
where in the last equation we have identified the spaces $H^{1/2}_{00}(\Sigma^\pm)'$ and $H^{1/2}_{00}(\widetilde{\Sigma}^\pm)'$.\\[12pt]
The space $H^{1/2}_{\beta}(\Gamma_a^\pm)$ is defined by 
\[
	H^{1/2}_{\beta}(\Gamma_a^\pm) = \gamma_0^\pm\Big(H^1_{\beta}(B^\pm)\Big)
\]
where $\gamma_0^\pm\in\mathcal{L}(H^1(B^\pm),H^{1/2}(\Gamma_a^\pm))$ is the trace map on $\Gamma_a^\pm$ : $
	\forall\, u\in H^1(B^\pm),\; \gamma_0^\pm u = u|_{\Gamma_a^\pm}.$
$H^{1/2}_{\beta}(\Sigma_0)$ is then a dense subspace of $H^{1/2}(\Sigma_0)$ and 
the injection from $H^{1/2}_{\beta}(\Gamma_a^\pm)$ onto $H^{1/2}(\Gamma_a^\pm)$ is continuous.\\\\
We define $H^{-1/2}_{\beta}(\Gamma_a^\pm)$ as the dual of $H^{1/2}_{\beta}(\Gamma_a^\pm)$.
\\\\
Finally, the trace application $\gamma_1^\pm\in\mathcal{L}(H^1(\triangle,B^\pm),H^{1/2}_{(a,a)}(\Gamma_a^\pm)^\prime)$ defined by :
\[
	\forall\, u\in H^1(\triangle,B^\pm),\quad \gamma_1^\pm u = \frac{\partial u}{\partial x}\Big|_{\Gamma_a^\pm}
\]
is a continuous application from $H^1_{\beta}(\triangle,B^\pm) \text{ onto } 	H^{-1/2}_{\beta}(\Gamma_a^\pm)$ and we can show that
\[
	H^{-1/2}_{\beta}(\Gamma_a^\pm) = \gamma_1\Big(H^1_{\beta}(\triangle,B^\pm)\Big).
\]
Let us now introduce the two half-band problems: for any $\beta$ and $\alpha$ and for any given $\varphi$ in $H^{1/2}_\beta(\Gamma^\pm_a)$ 
\begin{equation}\label{eq:pbplus}\tag{$\mathcal{P}^\pm$}
		\text{Find}\;u^\pm\in\,H^1_\beta(\triangle,B^\pm),\quad\quad
	\begin{array}{|l}
		-\triangle u^\pm-\rho_p\alpha^2 u^\pm = 0\quad\text{in}\;B^\pm \\[3pt]
		u|_{\Gamma_a^\pm} = \varphi.
	\end{array}
\end{equation}
\begin{theorem}[Well-posedness of the problem \eqref{eq:pbplus}]\label{th:pb+-_bien}~\\~
	If $\alpha^2\notin \sigma_{ess}(\beta)$, the problem \eqref{eq:pbplus} is well-posed in $H^1_\beta$ except for a countable set of frequencies which depends on $\beta$.\\\\
	If the periodicity cell is symmetric with respect to the axis $x=0$ and if $\alpha^2\notin \sigma_{ess}(\beta)$, the problem \eqref{eq:pbplus} is always well-posed in $H^1_\beta$.
\end{theorem}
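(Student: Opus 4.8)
The plan is to recast \eqref{eq:pbplus} as a resolvent equation for a self-adjoint operator and then to locate its spectrum by comparison with the periodic operator $A_p(\beta)$. First I would remove the inhomogeneous boundary condition: since $\gamma_0^\pm$ admits a continuous right inverse from $H^{1/2}_\beta(\Gamma^\pm_a)$ into $H^1_\beta(B^\pm)$, choose a lifting $R\varphi$ of $\varphi$ and seek $u^\pm = u_0 + R\varphi$ with $u_0$ in the closed subspace $V^\pm = \{u\in H^1_\beta(B^\pm),\ u|_{\Gamma^\pm_a}=0\}$. Then \eqref{eq:pbplus} is equivalent to finding $u_0\in V^\pm$ with $\int_{B^\pm}\nabla u_0\cdot\overline{\nabla v} - \alpha^2\int_{B^\pm}\rho_p\,u_0\,\bar v = \ell(v)$ for all $v\in V^\pm$, where $\ell$ is the bounded antilinear form produced by the lifting. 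Introducing the Dirichlet realization $A_D^\pm(\beta) = -\rho_p^{-1}\triangle$ whose domain consists of the elements of $H^1_\beta(\triangle,B^\pm)$ vanishing on $\Gamma^\pm_a$ (the Friedrichs extension of the coercive form $\int\nabla u\cdot\overline{\nabla v}$ on $V^\pm$, hence self-adjoint and positive in $L^2(B^\pm,\rho_p\,dxdy)$), this reads $(A_D^\pm(\beta)-\alpha^2)u_0 = f$. Thus \eqref{eq:pbplus} is well-posed in $H^1_\beta$ if and only if $\alpha^2$ lies in the resolvent set of $A_D^\pm(\beta)$.

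The crux is then to prove that $\sigma_{ess}(A_D^\pm(\beta)) = \sigma(A_p(\beta)) = \sigma_{ess}(\beta)$, so that the hypothesis $\alpha^2\notin\sigma_{ess}(\beta)$ forces $\alpha^2\notin\sigma_{ess}(A_D^\pm(\beta))$. The inclusion $\sigma(A_p(\beta))\subseteq\sigma_{ess}(A_D^\pm(\beta))$ I would get from singular Weyl sequences built out of the Bloch modes underlying the decomposition \eqref{eq:spectre_Ap}, translated deep into the half-band and truncated far from $\Gamma^\pm_a$, the truncation error being controlled because its support is pushed to infinity. The reverse inclusion $\sigma_{ess}(A_D^\pm(\beta))\subseteq\sigma(A_p(\beta))$ is the delicate point: for $\lambda$ in a gap of $A_p(\beta)$ the whole-band resolvent has a kernel decaying exponentially in $\abs{x}$ (a Combes--Thomas estimate, in the spirit of \cite{Figotin:1996c}), from which one constructs a parametrix for $A_D^\pm(\beta)-\lambda$ modulo a compact operator localized near $\Gamma^\pm_a$, showing that $A_D^\pm(\beta)-\lambda$ is Fredholm and hence $\lambda\notin\sigma_{ess}(A_D^\pm(\beta))$. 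Note that the soft Weyl-theorem comparison used in Proposition~\ref{prop:essential_spectrum} is \emph{not} available here: the Dirichlet truncation at $\Gamma^\pm_a$ is not a compact perturbation of the free periodic operator, the embedding $H^1(B^\pm)\hookrightarrow L^2(B^\pm)$ fails to be compact on the unbounded half-band, and analytic Fredholm theory cannot be applied naively. This half-guide spectral analysis is where essentially all the work sits, and it is the main obstacle.

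Granting it, $A_D^\pm(\beta)$ possesses, outside $\sigma_{ess}(\beta)$, only isolated eigenvalues of finite multiplicity which can accumulate solely at the band edges; the set of $\alpha>0$ for which $\alpha^2$ is such an eigenvalue is therefore at most countable and depends on $\beta$ through the operator $A_D^\pm(\beta)$. For every other $\alpha$ with $\alpha^2\notin\sigma_{ess}(\beta)$ one has $\alpha^2\in\rho(A_D^\pm(\beta))$, the resolvent equation has a unique solution depending continuously on $f$, and unwinding the lifting $u^\pm=u_0+R\varphi$ yields continuous dependence on $\varphi$, i.e. well-posedness of \eqref{eq:pbplus}. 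This proves the first assertion.

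For the second assertion I would rule out these exceptional eigenvalues by reflection. Suppose $u^\pm\in V^\pm$ is an eigenfunction of $A_D^\pm(\beta)$ with $\alpha^2\notin\sigma_{ess}(\beta)$. When the periodicity cell is symmetric with respect to $x=0$, the coefficient $\rho_p$ is invariant under the reflection $x\mapsto 2a-x$ about $\Gamma^\pm_a$, so the odd reflection of $u^\pm$ across $\Gamma^\pm_a$ is meaningful: the vanishing Dirichlet trace makes the extension continuous across $\Gamma^\pm_a$, while the odd reflection matches the normal derivatives and prevents any distributional jump, so the extended function $\tilde u$ lies in $H^1_\beta(\triangle,B)$ on the full band and satisfies $-\rho_p^{-1}\triangle\tilde u = \alpha^2\tilde u$. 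Since $u^\pm$ is square integrable and decays away from $\Gamma^\pm_a$, $\tilde u$ is a nonzero element of $L^2(B,\rho_p\,dxdy)$, hence an eigenfunction of $A_p(\beta)$ associated with $\alpha^2$. This contradicts $\alpha^2\notin\sigma(A_p(\beta))=\sigma_{ess}(\beta)$. Consequently $A_D^\pm(\beta)$ has no eigenvalue outside its essential spectrum, the exceptional countable set is empty, and \eqref{eq:pbplus} is well-posed for every admissible $\alpha$.
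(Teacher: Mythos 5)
Your proposal has the same skeleton as the paper's proof: reduce \eqref{eq:pbplus} to the invertibility of the half-band Dirichlet realization (your $A_D^\pm(\beta)$, the paper's $A_{D,\pm}(\beta)$), show that outside $\sigma_{ess}(\beta)$ its spectrum consists of at most countably many eigenvalues, and eliminate these by odd reflection in the symmetric case. Where you genuinely diverge is the spectral localization $\sigma_{ess}(A_{D,\pm}(\beta))\subseteq\sigma_{ess}(\beta)$ --- and the reason you give for diverging is incorrect. The paper does not compare the half-band operator with $A_p(\beta)$ across two different Hilbert spaces; it forms the decoupled whole-band operator $A_D(\beta)=A_{D,+}(\beta)\oplus A_{D,-}(\beta)$, i.e. $-\rho_p^{-1}\triangle$ on the band $B$ with one extra Dirichlet condition on the compact segment $\Gamma_a^+$, and compares it with $A(\beta)$ on the same space. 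That resolvent difference \emph{is} compact, contrary to your assertion: writing $u=(A(\beta)+1)^{-1}f$ and $u_D=(A_D(\beta)+1)^{-1}f$, the difference $u-u_D$ solves homogeneous Dirichlet problems on either side of $\Gamma_a^+$ with boundary datum $u|_{\Gamma_a^+}$; the map $f\mapsto u|_{\Gamma_a^+}$ is bounded from $L^2(B)$ into $H^{3/2}(\Gamma_a^+)$ by interior elliptic regularity, and $H^{3/2}(\Gamma_a^+)\hookrightarrow H^{1/2}(\Gamma_a^+)$ is compact because $\Gamma_a^+$ is a \emph{bounded} segment. The non-compactness of $H^1(B^\pm)\hookrightarrow L^2(B^\pm)$, which you invoke, is beside the point: the compactness lives on the interface, not in the bulk. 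Weyl's theorem, exactly as in Proposition \ref{prop:essential_spectrum}, then gives $\sigma_{ess}(A_D(\beta))=\sigma_{ess}(A(\beta))=\sigma_{ess}(\beta)$, and since $\sigma(A_D)=\sigma(A_{D,+})\cup\sigma(A_{D,-})$ for a direct sum, the needed inclusion follows in two lines.

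Your substitute for this step --- singular Weyl sequences for one inclusion, Combes--Thomas decay of the gap resolvent plus a cutoff parametrix for the other --- is a standard and workable route, and it proves slightly more (equality of the essential spectra rather than inclusion; note that the Weyl-sequence half is not needed for the theorem at all). But it is precisely the step you yourself describe as carrying essentially all the work, and you leave it as a sketch, whereas the paper's decoupling argument removes the difficulty entirely. Two smaller points. First, your contradiction in the symmetric case, with $\alpha^2\notin\sigma(A_p(\beta))$, is the right one; the paper's phrase ``an eigenvector of $A$'' should indeed be read as an eigenvector of $A_p(\beta)$, since the reflected function solves the purely periodic problem. Second, both you and the paper implicitly use that the reflection $x\mapsto 2a-x$ leaves $\rho_p$ invariant; symmetry of the periodicity cell about $x=0$ gives this only if the cut $\Gamma_a^\pm$ is placed compatibly with the lattice (e.g. $2a\in L_x\Z$), an assumption worth stating.
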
~\\
\begin{proof}
It is enough to show the result for $(\mathcal{P}^+)$. We introduce the following operators
	\[
	\begin{array}{|l}
		\dsp A_{D,+}(\beta) = -\frac{1}{\rho_p}\triangle,\quad D(A_{D,+}(\beta)) = \left\{u^+\in H^1_\beta(B^+),\;u^+\big|_{\Gamma_a^+} = 0\right\}\\[5pt]
		\dsp A_{D,-}(\beta) = -\frac{1}{\rho_p}\triangle,\quad D(A_{D,-}(\beta)) = \left\{u^-\in H^1_\beta(\Omega\setminus B^+),\;u^-\big|_{\Gamma_a^+} = 0\right\}\\[5pt]
		\dsp A_{D}(\beta) = -\frac{1}{\rho_p}\triangle,\quad D(A_{D}(\beta)) = \left\{u\in H^1_\beta(\Omega),\;u\big|_{\Gamma_a^+} = 0\right\}
	\end{array}
	\]
It is easy to see that the spectrum of $A_D$ is the union of the spectrums of $A_{D,+}$ and $A_{D,-}$. Moreover the resolvant of $A_D$ is a compact perturbation of the resolvant of $A$ so $A_D$ and $A$ has the same essential spectrum. These two properties implies that $A_{D,+}$ has its essential spectrum included in the spectrum of $A$ but this operator may have a countable set of eigenvalues. In conclusion, $\alpha^2\notin\sigma_{ess}(\beta)$, the problem $(\mathcal{P}^+)$ is well posed if $\alpha^2$ is not an eigenvalue of $A_{D,+}$.
	\\\\
		If the periodicity cell is symmetric with respect to the axis $x=0$, the operator $A_{D,+}$ has no eigenvalues. Indeed, if an eigenvector exists, we could construct by antisymmetry with respect to $\Gamma_a^+$ an eigenvector of $A$, which is not possible. 
\end{proof}~\\\\
\begin{remark}[Robin-to-Robin operators instead of DtN operators]\label{rem:RtR_vs_DtN}
	The countable set of frequencies for which the half-band problems are not well-posed is introduced by our method of construction of DtN operators. If we use Robin-to-Robin operators instead of Dirichlet-to-Neumann operators, the problems are always well posed when the frequency $\alpha^2\notin \sigma_{ess}(\beta)$. We choose to use DtN operators in the present paper to simplify the presentation of the method (see \cite{Fliss:2009,Fliss:2009b} for more details on Robin-to-Robin operators)
\end{remark}
~\\\\
Suppose from this point that the problems $(\mathcal{P}^+)$ and $(\mathcal{P}^-)$ are well posed. We denote by $u^+(\beta,\alpha;\varphi)$ and $u^-(\beta,\alpha;\varphi)$ the respective unique solutions.\\\\ 
The DtN operators $\Lambda^\pm(\beta,\alpha)\in \mathcal{L}(H^{1/2}_\beta(\Gamma^\pm_a),H^{-1/2}_\beta(\Gamma^\pm_a))$ are then defined by
\[
	\forall\varphi\in H^{1/2}_\beta(\Gamma^\pm_a),\quad \Lambda^\pm (\beta,\alpha)\varphi = \mp\partial_x\,u^\pm(\beta,\alpha;\varphi)
\]
or else $\forall\varphi,\psi\in H^{1/2}_\beta(\Gamma^\pm_a)$
\begin{multline}\label{eq:def_dtn}
	<\Lambda^\pm (\beta,\alpha)\varphi,\psi>_{\Gamma^\pm_a} = \int_{B^\pm}\nabla u^\pm(\beta,\alpha;\varphi)\cdot \nabla u^\pm(\beta,\alpha;\psi)\\-\alpha^2 \int_{B^\pm}\rho_p u^\pm(\beta,\alpha;\varphi) u^\pm(\beta,\alpha;\psi)
\end{multline}
where $<\cdot,\cdot>_{\Gamma^\pm_a}$ denotes the duality product between $H^{-1/2}_{\beta}(\Gamma^\pm_a)$ and $H^{1/2}_{\beta}(\Gamma^\pm_a)$.
We have then by definition the continuity properties of the DtN operators
\begin{proposition}\label{prop:dtn_continuous}
	For any $\beta$, the DtN operators $\Lambda^\pm(\beta,\alpha)$ are continuous from $H^{1/2}_{\beta}(\Gamma^\pm_a)$ onto $H^{-1/2}_{\beta}(\Gamma^\pm_a)$ and are norm continuous with respect to $\alpha$.
\end{proposition}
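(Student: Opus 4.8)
The plan is to read both claims directly off the weak definition \eqref{eq:def_dtn} of $\Lambda^\pm(\beta,\alpha)$, using only that the half-band problem $(\mathcal{P}^\pm)$ is well-posed. Write $S^\pm(\alpha):\varphi\mapsto u^\pm(\beta,\alpha;\varphi)$ for the solution map. By the proof of Theorem~\ref{th:pb+-_bien}, well-posedness of $(\mathcal{P}^\pm)$ holds precisely when $\alpha^2\notin\sigma_{ess}(\beta)$ is not an eigenvalue of $A_{D,\pm}(\beta)$, and it means exactly that $S^\pm(\alpha)$ is a bounded linear operator from $H^{1/2}_\beta(\Gamma^\pm_a)$ into $H^1_\beta(\triangle,B^\pm)$. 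I denote by $C(\alpha)$ its norm, so that $\norm{u^\pm(\beta,\alpha;\varphi)}_{H^1(B^\pm)}\leq C(\alpha)\,\norm{\varphi}_{H^{1/2}(\Gamma^\pm_a)}$.

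For the continuity (boundedness into $H^{-1/2}_\beta(\Gamma^\pm_a)$), I would simply estimate the right-hand side of \eqref{eq:def_dtn} by Cauchy--Schwarz together with $0<\rho_p\leq\rho_+$, which gives, for all $\varphi,\psi$,
\[
\abs{\langle\Lambda^\pm(\beta,\alpha)\varphi,\psi\rangle_{\Gamma^\pm_a}}
\leq \lrpar{1+\alpha^2\rho_+}\,C(\alpha)^2\,\norm{\varphi}_{H^{1/2}(\Gamma^\pm_a)}\,\norm{\psi}_{H^{1/2}(\Gamma^\pm_a)}.
\]
Taking the supremum over $\psi$ in the unit ball identifies $\langle\Lambda^\pm(\beta,\alpha)\varphi,\cdot\rangle$ with an element of $H^{-1/2}_\beta(\Gamma^\pm_a)$ whose norm is bounded by $(1+\alpha^2\rho_+)\,C(\alpha)^2\,\norm{\varphi}_{H^{1/2}(\Gamma^\pm_a)}$, which is exactly the asserted mapping property.

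For the norm continuity in $\alpha$, the key step is to control $u^\pm(\beta,\alpha;\varphi)-u^\pm(\beta,\alpha';\varphi)$. I would observe that this difference has zero trace on $\Gamma^\pm_a$ and solves
\[
-\triangle w-\rho_p\alpha^2 w=\rho_p\lrpar{\alpha^2-(\alpha')^2}\,u^\pm(\beta,\alpha';\varphi)\quad\text{in }B^\pm;
\]
since $\alpha^2$ is not an eigenvalue of $A_{D,\pm}(\beta)$, this homogeneous-trace problem is well-posed with an $L^2\to H^1$ bound, so that $\norm{u^\pm(\beta,\alpha;\varphi)-u^\pm(\beta,\alpha';\varphi)}_{H^1(B^\pm)}\leq \tilde C(\alpha)\,\abs{\alpha^2-(\alpha')^2}\,\norm{\varphi}_{H^{1/2}(\Gamma^\pm_a)}$. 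Writing $u_\alpha=u^\pm(\beta,\alpha;\varphi)$ and $v_\alpha=u^\pm(\beta,\alpha;\psi)$ and inserting this into \eqref{eq:def_dtn}, I would split each difference by adding and subtracting a mixed term, e.g. $\int\nabla(u_\alpha-u_{\alpha'})\cdot\nabla v_\alpha+\int\nabla u_{\alpha'}\cdot\nabla(v_\alpha-v_{\alpha'})$ for the Dirichlet part, and analogously for the $\alpha^2\rho_p$ mass part, whose scalar coefficient contributes an extra factor $\abs{\alpha^2-(\alpha')^2}$. Collecting terms yields $\norm{\Lambda^\pm(\beta,\alpha)-\Lambda^\pm(\beta,\alpha')}_{\mathcal L}\leq C\,\abs{\alpha-\alpha'}$ for $\alpha'$ near $\alpha$, hence norm continuity.

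The part that needs care, and which I expect to be the main obstacle, is keeping the constants uniform: both $C(\alpha)$ and $\tilde C(\alpha)$ are norms of resolvent-type operators and must stay bounded on a neighborhood of the base point $\alpha_0$, otherwise the difference estimates do not close. This is where I would use that $\alpha_0^2$ lies in a gap of $\sigma_{ess}(\beta)$ and is not an eigenvalue of $A_{D,\pm}(\beta)$: in a gap the spectrum of $A_{D,\pm}(\beta)$ reduces to isolated eigenvalues, so $\alpha_0^2$ possesses a spectrum-free neighborhood on which the resolvent is norm continuous, and in particular locally bounded. With this local uniformity the estimates above give the claimed continuity of $\alpha\mapsto\Lambda^\pm(\beta,\alpha)$.
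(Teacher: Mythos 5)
Your proposal is correct and follows essentially the same route as the paper: boundedness is read off the solution-map estimate for $(\mathcal{P}^\pm)$, and norm continuity comes from the same resolvent-identity trick, namely that $u^\pm(\beta,\alpha;\varphi)-u^\pm(\beta,\alpha';\varphi)$ solves a homogeneous-Dirichlet problem with right-hand side $\rho_p(\alpha^2-(\alpha')^2)\,u^\pm(\beta,\alpha';\varphi)$, giving a locally Lipschitz bound in $\alpha^2$. The only cosmetic differences are that you estimate the operator difference through the bilinear form \eqref{eq:def_dtn} with a mixed-term splitting (the paper bounds the normal trace of the difference solution directly) and that you control the constants via a spectrum-free neighborhood, where the paper writes them quantitatively as $C/\mathrm{dist}(\alpha^2,\sigma_\pm(\beta))$.
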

\begin{proof}
We introduce the operator 
	\[
		A_{D,\pm}(\beta) = -\frac{1}{\rho_p}\triangle,\quad D(A_{D,\pm}(\beta)) = \left\{u^\pm\in H^1_\beta(B^\pm),\;u^\pm\big|_{\Gamma_a^\pm} = 0\right\}.
	\]
We have seen in Theorem \ref{th:pb+-_bien} that if $\alpha^2\notin\sigma_{ess}(\beta)$ and $\alpha^2\notin\sigma_{d,\pm}(\beta)$ where $\sigma_{d,\pm}(\beta)$ is the discrete spectrum of $A_{D,\pm}(\beta)$ then the problem $(\mathcal{P}^\pm)$ is well posed. Moreover, it exists $C$ independent from $\alpha$ such that
	\begin{equation}\label{eq:cont_u+}
		 \forall \varphi\in H^{1/2}_\beta(\Gamma_a^\pm), 
		\quad\norm{u^\pm(\beta,\alpha;\varphi)}_{H^1(B^\pm)}\leq \frac{C}{\text{dist}(\alpha^2,\sigma_\pm(\beta))} \norm{\varphi}_{H^{1/2}_\beta(\Gamma_a^\pm)}
	\end{equation}
	and so
	\[
			\forall \varphi\in H^{1/2}_\beta(\Gamma_a^\pm),\quad\norm{\Lambda^\pm(\beta,\alpha)\varphi}_{H^{-1/2}_{\beta}(\Gamma_a^\pm)}\leq \frac{C}{\text{dist}(\alpha^2,\sigma_\pm(\beta))}\norm{\varphi}_{H^{1/2}_\beta(\Gamma_a^\pm)}.
	\]
where $\sigma_\pm(\beta)$ is the spectrum of $A_{D,\pm}(\beta)$.\\\\
Let now $\alpha_1$ and $\alpha_2$ are such that each corresponding half-band problem $(\mathcal{P}^\pm)$ is well posed. For any $\beta$ and any data $\varphi$ in $H^{1/2}_\beta(\Gamma_a^\pm)$, $v^\pm = u^\pm(\beta,\alpha_1;\varphi)- u^\pm(\beta,\alpha_2;\varphi)$ is solution of 
\[
	\begin{array}{|l}
		-\triangle v^\pm-\rho_p\alpha_1^2 v^\pm = \rho_p(\alpha_2^2-\alpha_1^2) u^\pm(\beta,\alpha_2;\varphi)\quad\text{in}\;B^\pm \\[3pt]
		v^\pm|_{\Gamma_a^\pm} =0.
	\end{array}
\]
That implies that it exists $C$ independent from $\alpha_1$ and $\alpha_2$ such that
\[
	\norm{v}_{H^1_\beta(\Omega^\pm)}\leq \frac{C}{\text{dist}(\alpha_1^2,\sigma_\pm(\beta))}\abs{\alpha^2_1-\alpha^2_2}\norm{u^\pm(\beta,\alpha_2;\varphi)}_{H^1_\beta(B^\pm)}
\]
then using \eqref{eq:cont_u+}, we deduce that $\forall\varphi\in H^{1/2}_\beta(\Gamma_a^\pm),\quad$
\[
	\norm{\Lambda^\pm(\beta,\alpha_1)\varphi-\Lambda^\pm(\beta,\alpha_2)\varphi}_{H^{-1/2}_{\beta}(\Gamma_a^\pm)}\leq \frac{C}{\text{dist}(\alpha_1^2,\sigma_\pm(\beta))\,\text{dist}(\alpha_2^2,\sigma_\pm(\beta))}\abs{\alpha^2_1-\alpha^2_2}\norm{\varphi}_{H^{1/2}_\beta(\Gamma_a^\pm)}.
\]
\end{proof}~\\\\
Besides, the definition \ref{eq:def_dtn} allows us to establish the following property which will be useful for the proof of Lemma \ref{lem:garding_A0}.
\begin{lemma}\label{lem:garding_Lambda}
	For any $\beta$ and any $\alpha$ such that the problem $(\mathcal{P}^\pm)$ is well posed, the DtN operator $\Lambda^\pm(\beta,\alpha)$ satisfies the G{\aa}rding's inequality
	\begin{multline}
		\dsp\exists C_1,C(\alpha)>0,\quad\forall\varphi\in H^{1/2}_\beta(\Gamma^\pm_a)\\[5pt]
		\dsp	<\Lambda^\pm (\beta,\alpha)\varphi,\varphi>_{\Gamma^\pm_a}\; \geq C_1 \norm{\varphi}^2_{H^{1/2}_\beta(\Gamma_a^\pm)}-C(\alpha)\norm{\varphi}^2_{L^2(\Gamma_a^\pm)}
	\end{multline}
	where $C(\alpha)$ is a constant continuous with respect to $\alpha$.
\end{lemma}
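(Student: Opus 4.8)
The plan is to split $\Lambda^\pm(\beta,\alpha)$ into a coercive, $\alpha$-independent part controlling the full $H^{1/2}_\beta$-norm and a compact remainder producing the weaker $L^2(\Gamma_a^\pm)$-term; I treat $(\mathcal{P}^+)$, the case $(\mathcal{P}^-)$ being identical. First I introduce the coercive extension operator $\Lambda_0^+$ associated with $-\triangle u_0+\rho_p u_0=0$ in $B^+$, $u_0|_{\Gamma_a^+}=\varphi$ (formally the choice $\alpha^2=-1$ in \eqref{eq:pbplus}, which is always well posed). By \eqref{eq:def_dtn} its diagonal is the coercive energy $<\Lambda_0^+\varphi,\varphi>=\norm{\nabla u_0}^2_{L^2(B^+)}+\int_{B^+}\rho_p\abs{u_0}^2=:a_0(u_0,u_0)$. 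Since $u_0$ minimises $a_0$ over all functions of $H^1_\beta(B^+)$ with trace $\varphi$, the quantity $a_0(u_0,u_0)^{1/2}$ is a norm on $H^{1/2}_\beta(\Gamma_a^+)$ equivalent to the intrinsic one (upper bound by a bounded lifting, lower bound by the trace theorem); hence $<\Lambda_0^+\varphi,\varphi>\geq\gamma\norm{\varphi}^2_{H^{1/2}_\beta(\Gamma_a^+)}$ for some $\gamma>0$.

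Next I show that $K(\alpha):=\Lambda^+(\beta,\alpha)-\Lambda_0^+$ is compact from $H^{1/2}_\beta(\Gamma_a^+)$ into $H^{-1/2}_\beta(\Gamma_a^+)$. With $u_\alpha=u^+(\beta,\alpha;\varphi)$, the difference $v=u_\alpha-u_0$ vanishes on $\Gamma_a^+$ and solves $-\triangle v-\rho_p\alpha^2 v=\rho_p(1+\alpha^2)u_0$, whose right-hand side lies in $L^2(B^+)$, and $K(\alpha)\varphi=-\partial_x v|_{\Gamma_a^+}$. Because the quasi-periodicity in $y$ makes $B^+$ a half-cylinder whose only boundary $\Gamma_a^+$ is smooth, elliptic regularity up to $\Gamma_a^+$ gives $v\in H^2$ near $\Gamma_a^+$ with $\norm{\partial_x v}_{H^{1/2}(\Gamma_a^+)}\lesssim\norm{u_0}_{L^2(B^+)}+\norm{v}_{H^1(B^+)}\lesssim\norm{\varphi}_{H^{1/2}_\beta(\Gamma_a^+)}$, the last bound coming from the coercivity of $a_0$ and the well-posedness of \eqref{eq:pbplus} (Theorem \ref{th:pb+-_bien}). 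Thus $K(\alpha)$ is bounded into $H^{1/2}_\beta(\Gamma_a^+)$, and since $\Gamma_a^+$ is a bounded segment the injection $H^{1/2}_\beta(\Gamma_a^+)\hookrightarrow H^{-1/2}_\beta(\Gamma_a^+)$ is compact; composing, $K(\alpha)$ is compact.

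I then use the elementary fact that for a compact operator $K\in\mathcal{L}(V,V')$ with $V=H^{1/2}_\beta(\Gamma_a^+)$ densely and compactly embedded in $L^2(\Gamma_a^+)\hookrightarrow V'$ one has, for every $\eta>0$, $\abs{<K\varphi,\varphi>}\leq\eta\norm{\varphi}^2_V+C_\eta\norm{\varphi}^2_{L^2(\Gamma_a^+)}$: otherwise a sequence with $\norm{\varphi_n}_V=1$, $\varphi_n\to0$ in $L^2(\Gamma_a^+)$ and $\abs{<K\varphi_n,\varphi_n>}\geq\eta$ would satisfy $\varphi_n\rightharpoonup0$ in $V$, hence $K\varphi_n\to0$ in $V'$ and $<K\varphi_n,\varphi_n>\to0$, a contradiction. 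Applying this to $K(\alpha)$ with $\eta=\gamma/2$ and adding the bound of the first paragraph yields $<\Lambda^+(\beta,\alpha)\varphi,\varphi>\geq\tfrac{\gamma}{2}\norm{\varphi}^2_{H^{1/2}_\beta(\Gamma_a^+)}-C(\alpha)\norm{\varphi}^2_{L^2(\Gamma_a^+)}$, which is the claimed G\aa rding inequality with $C_1=\gamma/2$.

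The delicate point is that the compactness argument only furnishes some constant $C(\alpha)$, whereas the statement requires $C(\alpha)$ to be continuous in $\alpha$. To obtain this I would exploit the norm-continuity of $\alpha\mapsto\Lambda^+(\beta,\alpha)$, hence of $\alpha\mapsto K(\alpha)$, given by Proposition \ref{prop:dtn_continuous}: on any compact $\alpha$-interval (staying away from the discrete set where \eqref{eq:pbplus} fails) a spectral truncation of the self-adjoint compact family $K(\alpha)$ --- finitely many eigenmodes absorbed into the $L^2(\Gamma_a^+)$-term and the tail into the $\eta$-term --- produces a locally bounded choice of $C(\alpha)$, which can then be dominated by a continuous function. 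This quantitative, uniform-in-$\alpha$ control of the remainder, rather than the coercivity or the compactness themselves, is where the real work lies.
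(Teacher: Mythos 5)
Your fixed-$\alpha$ argument (the first three paragraphs) is correct, and it is a genuinely different route from the paper's. The paper never decomposes $\Lambda^\pm$: it starts directly from the identity \eqref{eq:def_dtn}, which together with the continuity of the trace map gives
\[
<\Lambda^\pm(\beta,\alpha)\varphi,\varphi>_{\Gamma_a^\pm}\;\geq\; C_1\norm{\varphi}^2_{H^{1/2}_\beta(\Gamma_a^\pm)}-(1+\alpha^2\rho_+)\norm{u^\pm}^2_{L^2(B^\pm)},
\]
and then converts the volume term into the boundary term, $\norm{u^\pm}_{L^2(B^\pm)}\leq c(\alpha)\norm{\varphi}_{L^2(\Gamma_a^\pm)}$, by a duality argument: solve $-\triangle v^\pm-\rho_p\alpha^2v^\pm=u^\pm$ with $v^\pm|_{\Gamma_a^\pm}=0$, use $H^2$ regularity and the resolvent bound to get $\norm{\partial_x v^\pm}_{L^2(\Gamma_a^\pm)}\leq C\,\text{dist}(\alpha^2,\sigma_\pm(\beta))^{-1}\norm{u^\pm}_{L^2(B^\pm)}$, then apply Green's formula twice to obtain $\norm{u^\pm}^2_{L^2(B^\pm)}=\pm\int_{\Gamma_a^\pm}\varphi\,\partial_x v^\pm$. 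The payoff is an explicit constant, roughly $C(\alpha)\sim(1+\alpha^2\rho_+)\,\text{dist}(\alpha^2,\sigma_\pm(\beta))^{-2}$, which is continuous in $\alpha$ by inspection. Your route (coercive reference operator $\Lambda_0^+$, compact remainder $K(\alpha)$, absorption by contradiction) proves the same inequality at each admissible $\alpha$, but only with \emph{some} constant $C(\alpha)$.

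That difference is exactly where your proof has a genuine gap, and it is not cosmetic: the continuity of $C(\alpha)$ is part of the statement and is what the paper uses afterwards (Lemma \ref{lem:garding_A0}, hence Theorem \ref{th:charac_mum}). The mechanism you propose for it --- spectral truncation of $K(\alpha)$ with ``finitely many eigenmodes absorbed into the $L^2$-term'' --- fails as stated: $K(\alpha)$ is compact and symmetric from $V=H^{1/2}_\beta(\Gamma_a^\pm)$ into $V'$, so its spectral decomposition (after composing with the Riesz isomorphism of $V$) is orthogonal with respect to the $V$-inner product; the finite-rank part of the quadratic form is a sum of terms $\lambda_j\abs{(\varphi,e_j)_V}^2$, which is controlled by $\norm{\varphi}_V^2$ and \emph{not} by $\norm{\varphi}^2_{L^2(\Gamma_a^\pm)}$. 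Absorbing it into the $L^2$ term would require the spectral projectors to extend boundedly to $L^2(\Gamma_a^\pm)$, which you do not have. The repair is simpler than what you sketch and uses only ingredients you already invoked: if $\norm{K(\alpha)-K(\alpha_0)}_{\mathcal{L}(V,V')}\leq\gamma/4$, which norm continuity (the argument of Proposition \ref{prop:dtn_continuous}) guarantees for $\alpha$ near any admissible $\alpha_0$, then $\abs{<K(\alpha)\varphi,\varphi>}\leq\abs{<K(\alpha_0)\varphi,\varphi>}+\tfrac{\gamma}{4}\norm{\varphi}_V^2$, so the absorption estimate at $\alpha_0$ with $\eta=\gamma/4$ holds at every nearby $\alpha$ with the \emph{same} constant $C_{\gamma/4}(\alpha_0)$; this yields a locally bounded admissible choice of $C(\cdot)$ on the open set of admissible $\alpha$, which can then be dominated by a continuous function. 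Alternatively, import the paper's duality estimate, which delivers a continuous constant in one stroke and makes the compactness machinery unnecessary.
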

\begin{proof}
	By definition of the DtN operator $\Lambda^\pm(\beta,\alpha)$ and by continuity of the trace application $\gamma_0^\pm$, we have
	\[
		\forall\varphi\in H^{1/2}_\beta(\Gamma^\pm_a),\quad<\Lambda^\pm (\beta,\alpha)\varphi,\varphi>_{\Gamma^\pm_a} \geq C_1 \norm{\varphi}^2_{H^{1/2}_\beta(\Gamma_a^\pm)}-(\alpha^2\rho_++1)\norm{u^\pm}^2_{L^2(B^\pm)}.
	\]
	where $u^\pm= u^\pm(\beta,\alpha;\varphi)$. To conclude, it is sufficient to show that 
	\[
		\forall\varphi\in H^{1/2}_\beta(\Gamma^\pm_a),\quad\norm{u^\pm}_{L^2(B^\pm)}\leq c(\alpha)\norm{\varphi}_{L^2(\Gamma_a^\pm)}
	\]
	with $c(\alpha)$ continuous with respect to $\alpha$. For this, we use a duality argument. Let $v^\pm\in H^1_\beta(\triangle,B^\pm)$ be the unique solution of the problem (which is well posed if the problem $(\mathcal{P}^\pm)$ is well posed)
	\begin{equation}\label{eq:v_pm}
		\begin{array}{|l}
			-\triangle v^\pm-\rho_p\alpha^2 v^\pm = u^\pm\quad\text{in}\;B^\pm \\[3pt]
			v|_{\Gamma_a^\pm} = 0.
		\end{array}
	\end{equation}
Since $u^\pm\in L^2(B^\pm)$ then $v^\pm\in H^2(B^\pm)$ (elliptic regularity) and using the well-posedness of the problem and the continuity of the trace aplication $\gamma_1^\pm$, we have in particular
\begin{equation}\label{eq:norm_v_pm}
	\norm{\frac{\partial v^\pm}{\partial x}}_{L^2(\Gamma_a^\pm)}\leq \frac{C}{\text{dist}(\alpha^2,\sigma_\pm(\beta))}\norm{u^\pm}_{L^2(B^\pm)}
\end{equation}
where $\sigma_\pm(\beta)$ is the spectrum of $A_{D,\pm}(\beta)$ defined in Theorem \ref{th:pb+-_bien}. Let us now multiply \eqref{eq:v_pm} by $u^\pm$, integrate over $B^\pm$ and apply Green's formula twice, we get
\[
	\norm{u^\pm}^2_{L^2(B^\pm)}=\pm\int_{\Gamma_a^\pm}\varphi\,\frac{\partial v^\pm}{\partial x}
\]
which together with \eqref{eq:norm_v_pm} finishes the proof.
\end{proof}
~\\\\
Finally, using the continuity of any eigenvector of \eqref{eq:eigpb} and its normal derivative across the sections $\Gamma^+_a$ and $\Gamma^-_a$, the next theorem is therefore straightforward.\\
\begin{theorem}[Problem with DtN conditions]\label{th:equiv_eigpb}
	The problem \eqref{eq:eigpb} is equivalent to the problem posed on $B_0=B\cap\Omega_0$
	\[
		\dsp \text{Find}\;\omega^2\notin \sigma_{ess}(\beta),\,\text{s.t.}\;\exists u_0 \in H^1(B_0),\;u_0\neq 0
	\]
\begin{equation}\label{eq:eigpb_0}\tag{$\mathcal{E}^\beta_0$}
\dsp	-\frac{1}{\rho}\triangle u_0=\omega^2 u_0,\quad\text{in}\;B_0
\end{equation}
$u_0$ satisfying the boundary conditions
\[
	 \dsp \begin{array}{|l}
	\dsp	+\partial_x\,u_0+\Lambda^+(\beta,\omega)\,u_0=0,\quad\text{on}\;\Gamma_a^+\\[5pt]
	\dsp-\partial_x\,u_0+\Lambda^-(\beta,\omega)\,u_0=0,\quad\text{on}\;\Gamma_a^-,\\[5pt]
	\dsp u_0|_{\Sigma_0}=e^{\imath\beta L_y}u_0|_{\widetilde{\Sigma}_0} ,\;\dsp \partial_y u_0|_{\Sigma_0}=e^{\imath\beta L_y}\partial_y u_0|_{\widetilde{\Sigma}_0}.
		\end{array}
\]
where $\Sigma_0 = \Sigma\cap\Omega_0 $ and $\widetilde{\Sigma}_0 = \widetilde{\Sigma}\cap\Omega_0 $.
\end{theorem}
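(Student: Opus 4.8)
The plan is to prove the two implications separately, viewing the band as the union $B = B^- \cup B_0 \cup B^+$ glued along the interfaces $\Gamma_a^-$ and $\Gamma_a^+$, and to translate the transmission of the field and of its normal derivative across these interfaces into the Dirichlet-to-Neumann conditions. Throughout I assume, as is implicit in the statement, that $\omega^2\notin\sigma_{ess}(\beta)$ and that $\omega^2$ lies outside the countable set of Theorem~\ref{th:pb+-_bien}, so that $(\mathcal{P}^+)$ and $(\mathcal{P}^-)$ are well posed and the operators $\Lambda^\pm(\beta,\omega)$ are defined.

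\emph{From $(\mathcal{E}^\beta)$ to $(\mathcal{E}^\beta_0)$.} Let $(\omega^2,u)$ solve $(\mathcal{E}^\beta)$, so $u\in D(A(\beta))\subset H^1(\triangle,B)$, $u\neq 0$, and set $u_0=u|_{B_0}$, $u^\pm=u|_{B^\pm}$. Since $\rho=\rho_p$ on $B^\pm$, each $u^\pm$ belongs to $H^1_\beta(\triangle,B^\pm)$ and solves $-\triangle u^\pm-\rho_p\omega^2u^\pm=0$ with Dirichlet data $\varphi^\pm=u|_{\Gamma_a^\pm}=u_0|_{\Gamma_a^\pm}$; by well-posedness it coincides with $u^\pm(\beta,\omega;\varphi^\pm)$, whence the very definition of the DtN operators gives $\mp\,\partial_x u^\pm|_{\Gamma_a^\pm}=\Lambda^\pm(\beta,\omega)\,u_0|_{\Gamma_a^\pm}$. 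Because $u\in H^1(\triangle,B)$, its one-sided normal traces on each $\Gamma_a^\pm$ agree, i.e. $\partial_x u_0|_{\Gamma_a^\pm}=\partial_x u^\pm|_{\Gamma_a^\pm}$, and substituting the two relations above yields exactly $+\partial_x u_0+\Lambda^+ u_0=0$ on $\Gamma_a^+$ and $-\partial_x u_0+\Lambda^- u_0=0$ on $\Gamma_a^-$. The equation $-\triangle u_0/\rho=\omega^2 u_0$ in $B_0$ and the quasi-periodicity conditions on $\Sigma_0,\widetilde{\Sigma}_0$ are obtained by restricting to $B_0$ those satisfied by $u$; and $u_0\neq 0$, since otherwise $u^\pm$ would solve $(\mathcal{P}^\pm)$ with zero data, forcing $u\equiv 0$.

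\emph{From $(\mathcal{E}^\beta_0)$ to $(\mathcal{E}^\beta)$.} Conversely, given a solution $u_0$ of $(\mathcal{E}^\beta_0)$, I would reconstruct a global field by setting $u=u_0$ on $B_0$ and $u=u^\pm(\beta,\omega;\,u_0|_{\Gamma_a^\pm})$ on $B^\pm$. The two pieces share the same Dirichlet trace on each $\Gamma_a^\pm$, so $u\in H^1_\beta(B)$; the $y$-quasi-periodicity conditions hold on all of $\Sigma,\widetilde{\Sigma}$ because they hold separately on $B_0$ (hypothesis) and on $B^\pm$ (built into $H^1_\beta(\triangle,B^\pm)$). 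It then remains to check that $u\in D(A(\beta))$ and that $A(\beta)u=\omega^2u$, i.e. that no spurious interface distribution appears in $-\triangle u$. This is where the DtN conditions are used: testing $-\triangle u$ against an arbitrary $\psi\in D(A(\beta))$ and applying Green's formula on each of the three subdomains produces interface terms on $\Gamma_a^\pm$ of the form $\langle[\partial_x u],\psi\rangle_{\Gamma_a^\pm}$, and the jumps $[\partial_x u]$ vanish precisely by the two boundary conditions combined with $\mp\partial_x u^\pm=\Lambda^\pm u_0$; hence $-\triangle u=\rho\omega^2 u\in L^2(B)$ and $u$ lies in the domain.

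\emph{Main obstacle.} The routine part is the bookkeeping of signs; the delicate point is the gluing/transmission argument of the reverse direction — namely that matching of the Dirichlet and Neumann traces across $\Gamma_a^\pm$ is exactly what is needed for the piecewise-defined $u$ to be a global element of $H^1(\triangle,B)$ solving the equation, with the distributional Laplacian carrying no surface term. This rests on the compatibility of the duality pairing $\langle\cdot,\cdot\rangle_{\Gamma_a^\pm}$ between $H^{-1/2}_\beta(\Gamma_a^\pm)$ and $H^{1/2}_\beta(\Gamma_a^\pm)$ with Green's formula, which is guaranteed by the continuity and surjectivity of the trace maps $\gamma_0^\pm$ and $\gamma_1^\pm$ established above. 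One should also keep in mind the caveat already noted: the equivalence is asserted only for those $\omega^2$ at which $\Lambda^\pm(\beta,\omega)$ exist, the countable exceptional set of frequencies of Theorem~\ref{th:pb+-_bien} being excluded (this limitation disappears if one uses the Robin-to-Robin variant of Remark~\ref{rem:RtR_vs_DtN}).
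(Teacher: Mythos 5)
Your proposal is correct and follows essentially the same route as the paper: the forward direction by restriction (using that an eigenfunction of $(\mathcal{E}^\beta)$ and its normal derivative are continuous across $\Gamma_a^\pm$, so the outer restrictions coincide with $u^\pm(\beta,\omega;\varphi^\pm)$ by well-posedness of $(\mathcal{P}^\pm)$), and the converse by gluing $u_0$ with the half-band solutions $u^\pm(\beta,\omega;u_0|_{\Gamma_a^\pm})$, the DtN boundary conditions ensuring that the normal-derivative jumps across $\Gamma_a^\pm$ vanish. The paper states this argument only in compressed form (calling it ``straightforward'' and giving the restriction/extension dictionary after the theorem), and your write-up supplies the same details, including the standing assumption that $(\mathcal{P}^\pm)$ are well posed.
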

\noindent These problems are equivalent in the sense that if $(\omega,u)$ is solution of \eqref{eq:eigpb} then $(\omega,u|_{B_0})$ is solution of \eqref{eq:eigpb_0}. Conversely, if $(u_0,\omega)$ is solution of \eqref{eq:eigpb_0} then $u$ defined by
	\[
		\begin{array}{|l}
			u|_{B_0} = u_0\\
			u|_{B^\pm} = u^\pm(\beta,\omega,\varphi),\quad\text{where}\;\varphi = u_0|_{\Gamma_a^\pm}
		\end{array}
	\]
	associated to the same value $\omega$ is solution of \eqref{eq:eigpb}. Moreover, the multiplicity of $\omega$ is the same for the two problems.
	\\\\
	Whereas the problem \eqref{eq:eigpb} was linear with respect to the eigenvalue $\omega^2$ but defined on an unbounded domain, the problem \eqref{eq:eigpb_0} is set on a bounded domain but non linear.\\
\begin{remark}
	Note that the problem \eqref{eq:eigpb_0} is also non linear with respect to $\beta$ (whereas the problem \eqref{eq:eigpb} can be rewritten as a quadratic eigenvalue problem). In other words, this difficulty would be present if we decided to fix $\omega$ and look for $\beta$.
\end{remark}~\\\\
\noindent We now  introduce the solution algorithm of the non linear eigenvalue problem, explain how to compute the DtN operators in the case where $\omega^2\notin\sigma_{ess}(\beta)$ and finally deduce another characterization of the essential spectrum $\sigma_{ess}(\beta)$.
	\subsection{Solution algorithm}\label{sub:solution_algorithm}
\noindent	For $\alpha^2\notin\sigma_{ess}(\beta)$, we denote by $A_0(\beta,\alpha)$ the operator 
	\[
	\begin{array}{|l}
		\dsp A_0(\beta,\alpha)=-\frac{1}{\rho_0}\triangle\\
		\dsp D(A_0(\beta,\alpha)) = \big\{u_0\in H^1(\triangle,B_0),\;
			 \dsp \begin{array}{|l}
			\dsp	+\partial_x\,u_0+\Lambda^+(\beta,\alpha)\,u_0=0,\quad\text{on}\;\Gamma_a^+\\[5pt]
			\dsp-\partial_x\,u_0+\Lambda^-(\beta,\alpha)\,u_0=0,\quad\text{on}\;\Gamma_a^-,\\[5pt]
			\dsp u_0|_{\Sigma_0}=e^{\imath\beta L_y}u_0|_{\widetilde{\Sigma}_0} ,\;\dsp \partial_y u_0|_{\Sigma_0}=e^{\imath\beta L_y}\partial_y u_0|_{\widetilde{\Sigma}_0}.
				\end{array}
		 \big\}.
	\end{array}
	\]
	The following lemma allows us to show the main properties of the operator $A_0(\beta,\alpha)$ given in Proposition \ref{prop:A_0_beta}
	\begin{lemma}\label{lem:garding_A0}
		The operator $A_0(\beta,\alpha)$ satisfies the G{\aa}rding's inequality 
		\begin{multline}
			\dsp\exists C_1,C(\alpha)>0,\quad\forall u \in D(A_0(\beta,\alpha))\\[5pt]
			\dsp	\left(A_0(\beta,\alpha)u,u\right)\; \geq C_1 \norm{u}^2_{H^1(B_0)}-C(\alpha)\norm{u}^2_{L^2(B_0)}
		\end{multline}
		where $C(\alpha)$ is a constant continuous with respect to $\alpha$.
	\end{lemma}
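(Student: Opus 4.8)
The plan is to compute the sesquilinear form $\left(A_0(\beta,\alpha)u,u\right)$ explicitly by integration by parts and to reduce everything to a control of the boundary contributions. Since $A_0(\beta,\alpha)=-\rho_0^{-1}\triangle$ is taken in the weighted space $H=L^2(B_0,\rho_0\,dxdy)$, the weight cancels and $\left(A_0(\beta,\alpha)u,u\right)=\int_{B_0}(-\triangle u)\,\bar u$. Green's formula then gives
\[
	\left(A_0(\beta,\alpha)u,u\right) = \int_{B_0}\abs{\nabla u}^2 - \int_{\partial B_0}\partial_n u\,\bar u,
\]
so the whole argument reduces to handling the four pieces of $\partial B_0 = \Gamma_a^+\cup\Gamma_a^-\cup\Sigma_0\cup\widetilde{\Sigma}_0$. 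Note that, since $\rho_-\le\rho_0\le\rho_+$, the weighted and unweighted $L^2(B_0)$ and $H^1(B_0)$ norms are equivalent, so the distinction is immaterial in the final inequality.

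First I would check that the contributions on $\Sigma_0$ and $\widetilde{\Sigma}_0$ cancel: the outward normals there are $\pm e_y$, and the quasi-periodic conditions $u_0|_{\Sigma_0}=e^{\imath\beta L_y}u_0|_{\widetilde{\Sigma}_0}$ and $\partial_y u_0|_{\Sigma_0}=e^{\imath\beta L_y}\partial_y u_0|_{\widetilde{\Sigma}_0}$ force the product $\partial_y u\,\bar u$ to take the \emph{same} value on the two matched faces (the phases $e^{\pm\imath\beta L_y}$ cancel against the conjugate), so the two integrals are equal and opposite. On $\Gamma_a^\pm$ the boundary conditions $\pm\partial_x u_0+\Lambda^\pm(\beta,\alpha)u_0=0$ defining $D(A_0(\beta,\alpha))$ allow me to substitute the normal derivatives; after accounting for the signs of the outward normals one obtains the clean identity
\[
	\left(A_0(\beta,\alpha)u,u\right) = \int_{B_0}\abs{\nabla u}^2 + <\Lambda^+(\beta,\alpha)u_0,u_0>_{\Gamma_a^+} + <\Lambda^-(\beta,\alpha)u_0,u_0>_{\Gamma_a^-}.
\]
At this point I would invoke Lemma~\ref{lem:garding_Lambda} on each of the two DtN terms.

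The hard part is then absorbing the negative boundary terms $-C(\alpha)\norm{u_0}^2_{L^2(\Gamma_a^\pm)}$ produced by Lemma~\ref{lem:garding_Lambda} (the positive $H^{1/2}_\beta(\Gamma_a^\pm)$ terms are harmless and may simply be discarded). For this I would use the standard $\eps$-trace inequality on the Lipschitz domain $B_0$: for every $\eps>0$ there is $C_\eps>0$ with $\norm{u}^2_{L^2(\Gamma_a^\pm)}\le \eps\norm{\nabla u}^2_{L^2(B_0)}+C_\eps\norm{u}^2_{L^2(B_0)}$. Choosing $\eps$ of order $1/C(\alpha)$ lets the $\eps\norm{\nabla u}^2$ part be absorbed by half of the gradient term, leaving
\[
	\left(A_0(\beta,\alpha)u,u\right)\ge \tfrac12\norm{\nabla u}^2_{L^2(B_0)} - \widetilde{C}(\alpha)\,\norm{u}^2_{L^2(B_0)}.
\]
Finally, using $\tfrac12\norm{\nabla u}^2_{L^2(B_0)}\ge \tfrac14\norm{u}^2_{H^1(B_0)}-\tfrac14\norm{u}^2_{L^2(B_0)}$ yields the claimed G\aa rding inequality with $C_1=\tfrac14$ and $C(\alpha)=\tfrac14+\widetilde{C}(\alpha)$. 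The continuity of this constant with respect to $\alpha$ follows from that of the constant in Lemma~\ref{lem:garding_Lambda} together with the explicit dependence $C_\eps\sim 1/\eps$ of the trace inequality, which makes $\widetilde{C}(\alpha)$ a continuous (polynomial) function of $C(\alpha)$.
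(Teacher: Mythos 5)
Your proof is correct and takes essentially the same route as the paper: Green's formula plus the boundary conditions in $D(A_0(\beta,\alpha))$ reduce $\left(A_0(\beta,\alpha)u,u\right)$ to $\norm{\nabla u}^2_{L^2(B_0)}$ plus the two DtN pairings (the paper compresses this, together with the application of Lemma \ref{lem:garding_Lambda}, into ``we obtain easily''), and the resulting negative terms $-C(\alpha)\norm{u}^2_{L^2(\Gamma_a^\pm)}$ are then absorbed by an $\eps$-type trace estimate. The only (minor) divergence is in that absorption step: the paper bounds $\norm{u}^2_{L^2(\Gamma_a^\pm)}$ by the trace map into $H^s(B_0)$, $s\in\,]1/2,1[$, followed by Ehrling's lemma via the compact embedding $H^1(B_0)\hookrightarrow H^s(B_0)$, whereas you invoke the direct $\eps$-trace inequality with explicit constant $C_\eps\sim 1/\eps$ --- an equally valid variant that has the small advantage of making the continuity of $C(\alpha)$ with respect to $\alpha$ fully explicit, since the paper's compactness argument yields a non-constructive $C_\eps$.
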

	\begin{proof}Using Lemma \ref{lem:garding_Lambda}, we obtain easily that it exists $c(\alpha)$, constant continuous with respect to $\alpha$, such that
		\[
			\forall u\in D(A_0(\beta,\alpha)),\quad \left(A_0(\beta,\alpha)u,u\right) \geq \norm{\nabla u}_{L^2(B_0)}^2 -c(\alpha)\left( \norm{u|_{\Gamma_a^+}}_{L^2(\Gamma_a^+)}^2+\norm{u|_{\Gamma_a^-}}^2_{L^2(\Gamma_a^-)}\right) .
		\]
	Let $s$ be in $]1/2,1[$. By continuity of the trace application in $H^s(B_0)$, we have
	\[
		\forall u\in H^s(B_0),\quad\norm{u|_{\Gamma_a^\pm}}^2_{L^2(\Gamma_a^\pm)}\leq C\norm{u}^2_{H^s(B_0)},
	\]	
	and from the compactness of the embedding $H^1(B_0) \hookrightarrow H^s(B_0)$, we could prove easily that
	\[
		\forall \varepsilon>0,\;\exists C_{\eps}>0,\;\forall u\in H^1(B_0),\quad \norm{u}_{H^s(B_0)}^2\leq \eps\norm{u}_{H^1(B_0)}^2+C_\eps \norm{u}_{L^2(B_0)}^2.
	\]
	Choosing $\eps$ small enough, these three relation give us the G{\aa}rding inequality. 
	\end{proof}~\\\\
	\begin{proposition}\label{prop:A_0_beta}
		The operator $A_0(\beta,\alpha)$ is self-adjoint, has a compact resolvant and is bounded from below. 
	\end{proposition}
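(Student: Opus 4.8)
The plan is to realize $A_0(\beta,\alpha)$ as the self-adjoint operator generated by a bounded, symmetric, semibounded sesquilinear form, and then to exploit the compactness of the Rellich embedding on the bounded cell $B_0$. On the form domain $V=\{u\in H^1(B_0):u|_{\Sigma_0}=e^{\imath\beta L_y}u|_{\widetilde{\Sigma}_0}\}$ I would introduce
\begin{equation*}
a_\alpha(u,v)=\int_{B_0}\nabla u\cdot\nabla\bar v \;+\; \langle\Lambda^+(\beta,\alpha)u,v\rangle_{\Gamma_a^+}\;+\;\langle\Lambda^-(\beta,\alpha)u,v\rangle_{\Gamma_a^-}.
\end{equation*}
Integrating $(-\triangle u,v)$ by parts on $B_0$, the boundary contributions on $\Sigma_0$ and $\widetilde{\Sigma}_0$ cancel thanks to the quasi-periodicity built into $V$ and into $D(A_0(\beta,\alpha))$ (the factors $e^{\imath\beta L_y}$ and $e^{-\imath\beta L_y}$ compensate), while the contributions on $\Gamma_a^\pm$ are exactly $\langle\Lambda^\pm u,v\rangle$ by the DtN relations defining the domain. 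Hence $(A_0(\beta,\alpha)u,v)=a_\alpha(u,v)$ for every $u\in D(A_0(\beta,\alpha))$ and $v\in V$, so $a_\alpha$ is the form attached to the operator.

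First I would check that $a_\alpha$ is continuous on $V\times V$, which follows from Proposition~\ref{prop:dtn_continuous} and the continuity of the trace maps $\gamma_0^\pm$. Its symmetry reduces to the self-adjointness of $\Lambda^\pm(\beta,\alpha)$, which is read off the manifestly symmetric right-hand side of \eqref{eq:def_dtn} (Hermitian once the conjugation proper to the complex setting is restored). Lemma~\ref{lem:garding_A0} supplies the G{\aa}rding inequality $a_\alpha(u,u)\geq C_1\norm{u}^2_{H^1(B_0)}-C(\alpha)\norm{u}^2_{L^2(B_0)}$, so $a_\alpha$ is bounded from below by $-C(\alpha)$ and, after shifting by $(C(\alpha)+1)(\cdot,\cdot)_{L^2}$, coercive on $V$; coercivity together with continuity makes the form norm equivalent to the $H^1(B_0)$ norm, whence $a_\alpha$ is closed. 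By the first representation theorem for closed, symmetric, semibounded forms \cite{kato}, $a_\alpha$ defines a unique self-adjoint operator, bounded from below by $-C(\alpha)$, whose domain consists of those $u\in V$ for which $v\mapsto a_\alpha(u,v)$ is continuous in the $L^2(B_0)$ topology. A final application of Green's formula identifies this abstract domain with $D(A_0(\beta,\alpha))$, the DtN conditions on $\Gamma_a^\pm$ being precisely the natural boundary conditions, yielding both self-adjointness and the lower bound.

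It remains to prove the compactness of the resolvent. Taking $\lambda=-(C(\alpha)+1)$, the shifted form is coercive, so by Lax--Milgram $A_0(\beta,\alpha)-\lambda$ is a bijection from $D(A_0(\beta,\alpha))$ onto $L^2(B_0)$ with the a priori bound $\norm{(A_0(\beta,\alpha)-\lambda)^{-1}f}_{H^1(B_0)}\leq C\norm{f}_{L^2(B_0)}$. Since $B_0$ is bounded, the embedding $H^1(B_0)\hookrightarrow L^2(B_0)$ is compact, and composing it with the bounded map $(A_0(\beta,\alpha)-\lambda)^{-1}\colon L^2(B_0)\to H^1(B_0)$ shows that the resolvent is compact as an operator on $L^2(B_0)$.

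The main obstacle is not the semiboundedness or the compactness, which follow mechanically from Lemma~\ref{lem:garding_A0} and the Rellich embedding, but the rigorous self-adjointness: one must make sure that \eqref{eq:def_dtn} defines a genuinely Hermitian form on $H^{1/2}_\beta(\Gamma_a^\pm)$, and that the weak DtN boundary conditions are equivalent, through Green's formula in $H^1(\triangle,B_0)$, to the strong ones appearing in $D(A_0(\beta,\alpha))$. This is the step where the trace and duality identifications established before the statement are genuinely needed, and where care is required so that the abstract operator produced by the representation theorem coincides with the operator as defined by its explicit domain.
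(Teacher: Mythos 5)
Your proof is correct and follows essentially the same route as the paper: both hinge on the G{\aa}rding inequality of Lemma \ref{lem:garding_A0} to get coercivity of the shifted form and hence (via Lax--Milgram, packaged in your case as Kato's first representation theorem) self-adjointness and the lower bound. Your write-up simply makes explicit two points the paper leaves implicit --- the identification of the abstract form domain with $D(A_0(\beta,\alpha))$ through Green's formula, and the compact-resolvent argument via the Rellich embedding $H^1(B_0)\hookrightarrow L^2(B_0)$ --- so it is a more detailed rendering of the same argument, not a different one.
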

\begin{proof} The only difficulty is to show the self-ajointness (the G{\aa}rding inequality implies that the operator is bounded from below).\\\\
	The operator $A_0(\beta,\alpha)$ is clearly symmetric. To show self-ajointness, it is sufficient to show that it exists a real number $\nu$ such that $A_0(\beta,\alpha)+\nu Id$ is surjective. The G{\aa}rding inequality gives us the coercivity and then the surjectivity of $A_0(\beta,\alpha)+C(\alpha) Id$.
\end{proof}
~\\\\The spectrum of $A_0(\beta,\alpha)$ is then a pure point one and consists of a sequence of eigenvalues $(\mu_n(\beta,\alpha))_n$  of finite multiplicity tending to $+\infty$:
\[
	\mu_1(\beta,\alpha)\leq \mu_2(\beta,\alpha)\leq\ldots\leq\mu_n(\beta,\alpha)\;\rightarrow\;+\infty
\]
The explicit expression of these eigenvalues using the Min-Max principle yields some regularity properties of each eigenvalue with respect to $\alpha$.\\
\begin{theorem}[Characterization of the eigenvalues $(\mu_n(\beta,\alpha))_n$]\label{th:charac_mum}
Let us fix $\beta \in [0,\pi/L_y[$ and $\alpha\notin \sigma_{ess}(\beta)$ and 
denote $H_0=L^2(B_0,\rho_0 dxdy)$. In the following $\mathcal{V}_m$ is the set of subspaces of dimension $m$ of 
\[	
H^1_\beta(B_0) = \left\{ u_0\in H^1(B_0),\;
\dsp\dsp u_0|_{\Sigma_0}=e^{\imath \beta}u_0|_{\widetilde{\Sigma}_0}
\right\}
\]
Then $\forall m\in\N,\quad$
\[
	\mu_m(\beta,\alpha) = \inf_{V_m\in\mathcal{V}_m}\,\sup_{u_0\in V_m,u_0\neq 0}\frac{\int_{B_0}\abs{\nabla u_0}^2+\left<\Lambda^+(\beta,\alpha)u_0,u_0\right>_{\Gamma_a^+}+\left<\Lambda^-(\beta,\alpha)u_0,u_0\right>_{\Gamma_a^
	-}}{\norm{u_0}_{H_0}^2}
\]
and $<\cdot,\cdot>_{\Gamma^\pm_a}$ denotes the duality product between $H^{-1/2}_{\beta}(\Gamma^\pm_a)$ and $H^{1/2}_{\beta}(\Gamma^\pm_a)$.\\\\
Moreover, the functions $(\beta,\alpha)\mapsto \mu_m(\beta,\alpha)$ are continuous.
\end{theorem}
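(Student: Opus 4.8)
The plan is to identify the quadratic form associated with $A_0(\beta,\alpha)$, apply the classical Courant--Fischer min-max principle, and then transfer the parameter dependence of the form to the eigenvalues. The starting point is the computation of the form on the operator domain. For $u_0\in D(A_0(\beta,\alpha))$ I would write $(A_0(\beta,\alpha)u_0,u_0)_{H_0}=-\int_{B_0}(\triangle u_0)\,\overline{u_0}$ (the weight $\rho_0$ cancels against the operator $-\rho_0^{-1}\triangle$), then integrate by parts by Green's formula. The boundary $\partial B_0$ consists of $\Gamma_a^\pm$, $\Sigma_0$ and $\widetilde\Sigma_0$. On $\Gamma_a^\pm$ the boundary conditions defining $D(A_0(\beta,\alpha))$ replace the outward normal derivatives by $-\Lambda^\pm(\beta,\alpha)u_0$, producing exactly the two duality terms $\langle\Lambda^\pm(\beta,\alpha)u_0,u_0\rangle_{\Gamma_a^\pm}$. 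On $\Sigma_0$ and $\widetilde\Sigma_0$ the quasi-periodic conditions $u_0|_{\Sigma_0}=e^{\imath\beta L_y}u_0|_{\widetilde\Sigma_0}$ and $\partial_y u_0|_{\Sigma_0}=e^{\imath\beta L_y}\partial_y u_0|_{\widetilde\Sigma_0}$ make the two contributions cancel, since the phase factor has modulus one. This yields precisely the quadratic form appearing in the numerator of the stated formula.

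Next I would identify the form domain and invoke min-max. Using the continuity of $\Lambda^\pm(\beta,\alpha)$ from Proposition~\ref{prop:dtn_continuous} together with the continuity of the trace maps, the form extends continuously from $D(A_0(\beta,\alpha))$ to all of $H^1_\beta(B_0)$. By the G{\aa}rding inequality of Lemma~\ref{lem:garding_A0} this extended form is closed and bounded from below with form domain exactly $H^1_\beta(B_0)$, and it is the form attached to the self-adjoint operator $A_0(\beta,\alpha)$. Since $A_0(\beta,\alpha)$ has compact resolvent by Proposition~\ref{prop:A_0_beta}, the Courant--Fischer theorem applies verbatim and gives the variational formula for $\mu_m(\beta,\alpha)$, the infimum being taken over $m$-dimensional subspaces of the form domain $H^1_\beta(B_0)$, exactly the space $\mathcal{V}_m$ in the statement.

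For the continuity, the $\alpha$-direction is straightforward once $\beta$ is fixed: the form domain $H^1_\beta(B_0)$ is then frozen, and the norm continuity of $\Lambda^\pm(\beta,\cdot)$ in Proposition~\ref{prop:dtn_continuous} makes the Rayleigh quotient continuous in $\alpha$ uniformly over the unit sphere of $H^1_\beta(B_0)$; continuity of $\alpha\mapsto\mu_m(\beta,\alpha)$ follows directly from the min-max expression, since a uniformly small perturbation of the quotient perturbs each min-max value by the same amount. The genuine difficulty is continuity in $\beta$, because the constraint space $H^1_\beta(B_0)$ itself moves with $\beta$, so one cannot compare min-max values over a fixed family of trial subspaces. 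To overcome this I would use the unitary substitution $u_0=e^{\imath\beta y}v_0$, which maps $H^1_\beta(B_0)$ isometrically onto the $\beta$-independent genuinely periodic space and converts the quasi-periodic conditions into periodic ones; in the new variable the volume term and the transported boundary terms acquire an explicit smooth dependence on $\beta$ through first- and zeroth-order contributions, while the trial space is now fixed. The main obstacle then reduces to checking that the transported operators $\Lambda^\pm(\beta,\alpha)$ depend continuously on $\beta$, which follows from the analytic dependence on $\beta$ of the half-band problems $(\mathcal{P}^\pm)$, exactly as the continuity of the periodic dispersion curves was obtained in the proof of Proposition~\ref{prop:essential_spectrum}. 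With the domain fixed and the form jointly continuous in $(\beta,\alpha)$, the min-max formula delivers joint continuity of $(\beta,\alpha)\mapsto\mu_m(\beta,\alpha)$.
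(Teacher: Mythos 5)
Your derivation of the variational formula itself is fine and is essentially the paper's argument: Green's formula identifies the quadratic form of $A_0(\beta,\alpha)$ (the DtN pairings appear on $\Gamma_a^\pm$, the quasi-periodic contributions on $\Sigma_0$ and $\widetilde\Sigma_0$ cancel), and Courant--Fischer applies because $A_0(\beta,\alpha)$ is self-adjoint, bounded below, with compact resolvent (Proposition \ref{prop:A_0_beta}) and form domain $H^1_\beta(B_0)$. The paper dismisses this part as "classical"; you supply the details, which is harmless.

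The continuity argument, however, has a genuine gap at exactly the point where the paper has to work. You assert that norm continuity of $\alpha\mapsto\Lambda^\pm(\beta,\alpha)$ makes the Rayleigh quotient continuous in $\alpha$ \emph{uniformly} over the unit sphere of $H^1_\beta(B_0)$, so that every min-max value is shifted by at most the same small amount. The perturbation of the \emph{numerator} is indeed $O\bigl(\abs{\alpha_1^2-\alpha_2^2}\,\norm{u_0}^2_{H^1(B_0)}\bigr)$, but the denominator of the quotient is $\norm{u_0}^2_{H_0}$, an $L^2$-type norm; hence the perturbation of the \emph{quotient} is of order $\abs{\alpha_1^2-\alpha_2^2}\,\norm{u_0}^2_{H^1(B_0)}/\norm{u_0}^2_{H_0}$, and the ratio $\norm{u_0}^2_{H^1(B_0)}/\norm{u_0}^2_{H_0}$ is unbounded on $H^1_\beta(B_0)$, whatever normalization you choose; in particular no bound uniform over the trial subspaces $V_m$ is available from this estimate alone. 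So "a uniformly small perturbation of the quotient" is simply not what you have, and the conclusion does not follow. The paper repairs this with the G{\aa}rding inequality of Lemma \ref{lem:garding_A0} --- which you invoke for closedness of the form but never where it is actually needed: adding $C(\alpha)\norm{u_0}^2$ to the numerator makes the shifted numerator dominate $C_1\norm{u_0}^2_{H^1(B_0)}$, so the perturbation is bounded by a small \emph{multiple of the shifted quotient itself}; this relative (multiplicative) bound passes through the inf-sup and gives $\abs{\nu_m(\beta,\alpha_1)-\nu_m(\beta,\alpha_2)}\le C\abs{\alpha_1^2-\alpha_2^2}\max\bigl(\nu_m(\beta,\alpha_1),\nu_m(\beta,\alpha_2)\bigr)$ up to the term in $\abs{C(\alpha_1)-C(\alpha_2)}$, whence continuity of $\mu_m$. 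Without this relative estimate your continuity step fails. Concerning $\beta$: you rightly note that the trial space moves with $\beta$ and propose the gauge transform $u_0=e^{\imath\beta y}v_0$; this addresses a direction about which the paper's own proof is in fact silent (it only proves continuity in $\alpha$ at fixed $\beta$), but your key claim there --- continuity in $\beta$ of the transported DtN operators --- is asserted rather than proven, so that part too remains a sketch.
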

\begin{proof}
	The first part of the theorem is a classical application of the min-max principle since $A_0(\beta,\alpha)$ is selfadjoint and bounded from below.\\\\	
	For the continuity, Lemma \ref{lem:garding_A0} gives us that for any $\beta$, it exists positive constants $C_1$ and $C(\alpha)$ which depends continuously of $\alpha$ such that for any $\alpha$
	\begin{equation}\label{eq:proof_coer}
		\forall u\in D(A_0(\beta,\alpha)),\quad \left(A_0(\beta,\alpha)u,u\right)+C(\alpha)\norm{u}_{L^2(B_0)}^2\geq C_1\norm{u}_{H^1(B_0)}^2
	\end{equation}
	Let us then consider the following Rayleigh quotients :
	\[
		\mathcal{R}(\beta,\alpha;u_0) = \frac{\int_{B_0}\abs{\nabla u_0}^2+\left<\Lambda^+(\beta,\alpha)u_0,u_0\right>_{\Gamma_a^+}+\left<\Lambda^-(\beta,\alpha)u_0,u_0\right>_{\Gamma_a^
		-} + C(\alpha)\norm{u_0}_{H^1(B_0)}}{\norm{u_0}^2_{H_0}}.
	\]	
	For any $\beta$, let $\alpha_1$ and $\alpha_2$ be such that the DtN operators $\Lambda^\pm(\beta,\alpha_i;\cdot)$ are well defined. For all $u_0\in H^1_\beta(B_0)$ such that $\norm{u_0}_{H_0}=1$, we have
	\begin{multline*}
		\abs{\mathcal{R}(\beta,\alpha_1;u_0)- \mathcal{R}(\beta,\alpha_2;u_0)}\leq \abs{\left<\left(\Lambda^+(\beta,\alpha_1)-\Lambda^+(\beta,\alpha_2)\right)u_0,u_0\right>_{\Gamma^+_a}} 
		\\  +\abs{\left<\left(\Lambda^-(\beta,\alpha_1)-\Lambda^-(\beta,\alpha_2)\right)u_0,u_0\right>_{\Gamma^-_a}}+\abs{C(\alpha_1)-C(\alpha_2)}\norm{u}^2_{H^1(B_0)}.
	\end{multline*}
So using Proposition \ref{prop:dtn_continuous} and its proof and the continuity of the trace application from $H^1(B_0)$ onto $H^{1/2}(\Gamma_a^\pm)$, we obtain the existence of a constant $C'$ independent from $\alpha_1$ and $\alpha_2$ such that
	\begin{multline*}
		\abs{\mathcal{R}(\beta,\alpha_1;u_0)- \mathcal{R}(\beta,\alpha_2;u_0)}\leq\dsp \frac{C'}{\text{dist}(\alpha_1^2,\sigma_+(\beta))\,\text{dist}(\alpha_2^2,\sigma_+(\beta))}\abs{\alpha^2_1-\alpha^2_2}\norm{u_0}_{H^1(B_0)}^2\\
		+ \abs{C(\alpha_1)-C(\alpha_2)}\norm{u_0}^2_{H^1(B_0)}.
	\end{multline*}
Taking into account finally the coercivity property \eqref{eq:proof_coer}, we obtain 
	\[
	\begin{split}
		\mathcal{R}(\beta,\alpha_2;u_0)\leq \mathcal{R}(\beta,\alpha_1;u_0)&+\dsp \frac{C'}{\text{dist}(\alpha_1^2,\sigma_+(\beta))\,\text{dist}(\alpha_2^2,\sigma_+(\beta))}\abs{\alpha^2_1-\alpha^2_2}\mathcal{R}(\beta,\alpha_1;u_0) \\&+ \abs{C(\alpha_1)-C(\alpha_2)}\mathcal{R}(\beta,\alpha_1;u_0).
	\end{split}
	\]
For all $m$, if we denote 
	\[
		\forall m\in\N,\quad\nu_m(\beta,\alpha) = \inf_{V_m\in\mathcal{V}_m}\,\sup_{u_0\in V_m,u_0\neq 0}\mathcal{R}(\beta,\alpha;u_0)
	\]
we obtain from the last inequality for all $m$
	\[
	\begin{split}
		\nu_m(\beta,\alpha_2)\leq \nu_m(\beta,\alpha_1)&+\dsp \frac{C'}{\text{dist}(\alpha_1^2,\sigma_+(\beta))\,\text{dist}(\alpha_2^2,\sigma_+(\beta))}\abs{\alpha^2_1-\alpha^2_2}\nu_m(\beta,\alpha_1)\\
		&+ \abs{C(\alpha_1)-C(\alpha_2)}\nu_m(\beta,\alpha_1).
	\end{split}
	\]
By exchanging the roles of $\alpha_1$ and $\alpha_2$, we finally obtain
	\[
	\begin{split}
	\abs{\nu_m(\beta,\alpha_2)-\nu_m(\beta,\alpha_1)} \leq &\dsp \frac{C'}{\text{dist}(\alpha_1^2,\sigma_+(\beta))\,\text{dist}(\alpha_2^2,\sigma_+(\beta))}\abs{\alpha^2_1-\alpha^2_2}\max\left(\nu_m(\beta,\alpha_1),\nu_m(\beta,\alpha_2)\right)
	\\&+ \abs{C(\alpha_1)-C(\alpha_2)}\;\max\left(\nu_m(\beta,\alpha_1),\nu_m(\beta,\alpha_2)\right).
	\end{split}
	\]
We deduce the continuity of the $\mu_m(\beta,\alpha)$, $C(\alpha)$ depending continuously of $\alpha$.
\end{proof}~\\\\
Consequently, using Theorem \ref{th:equiv_eigpb}, the solutions of the problem \eqref{eq:eigpb} are obtained as explained in the following proposition.
\begin{proposition}\label{prop:nonlinear_vp}
	The solutions of the non-linear problem \eqref{eq:eigpb} are the roots of the equations :
	\[	\omega^2\notin\sigma_{ess}(\beta)\quad\text{and}\quad\exists m\geq 1,\;\mu_m(\beta,\omega)=\omega^2.
	\]
\end{proposition}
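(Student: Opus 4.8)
The plan is to read off the result directly from the equivalence established in Theorem~\ref{th:equiv_eigpb}, combined with the spectral description of the auxiliary operator $A_0(\beta,\alpha)$. First I would invoke Theorem~\ref{th:equiv_eigpb} to replace the original eigenproblem \eqref{eq:eigpb} by the reduced problem \eqref{eq:eigpb_0} posed on the bounded cell $B_0$: a couple $(\omega^2,u)$ solves \eqref{eq:eigpb} with $\omega^2\notin\sigma_{ess}(\beta)$ if and only if $(\omega^2,u_0)$, with $u_0=u|_{B_0}$, solves \eqref{eq:eigpb_0}, the two problems sharing the same multiplicity.

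The decisive---though elementary---observation is that the boundary value problem \eqref{eq:eigpb_0} is nothing but the eigenvalue equation for the operator $A_0(\beta,\omega)$ in which the DtN parameter has been frozen to the value $\omega$ sought as a frequency. Indeed, comparing the partial differential equation and the three boundary relations in \eqref{eq:eigpb_0} with the definition of the domain $D(A_0(\beta,\alpha))$, one sees that $(u_0,\omega^2)$ solves \eqref{eq:eigpb_0} if and only if $u_0\in D(A_0(\beta,\omega))$, $u_0\neq 0$ and $A_0(\beta,\omega)\,u_0=\omega^2\,u_0$. This is exactly the self-referential, fixed-point structure announced in the introduction: the spectral parameter $\omega^2$ must be an eigenvalue of an operator which itself depends on $\omega$ through the DtN maps $\Lambda^\pm(\beta,\omega)$.

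Next I would use Proposition~\ref{prop:A_0_beta}: for $\omega^2\notin\sigma_{ess}(\beta)$, and away from the exceptional countable set of Theorem~\ref{th:pb+-_bien} where the construction of $\Lambda^\pm(\beta,\omega)$ degenerates, the operator $A_0(\beta,\omega)$ is self-adjoint, bounded from below and has compact resolvent. Its spectrum therefore reduces to the discrete sequence $\mu_1(\beta,\omega)\leq\mu_2(\beta,\omega)\leq\cdots\to+\infty$. Consequently $\omega^2$ is an eigenvalue of $A_0(\beta,\omega)$ if and only if $\omega^2=\mu_m(\beta,\omega)$ for some $m\geq 1$. Chaining this equivalence with the two previous ones yields precisely the stated characterization.

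There is no serious obstacle here: once the identification of \eqref{eq:eigpb_0} with the eigenproblem for $A_0(\beta,\omega)$ is made, the proposition is a direct concatenation of Theorem~\ref{th:equiv_eigpb} and the spectral statement of Proposition~\ref{prop:A_0_beta}. The only point deserving care is to retain the standing hypothesis $\omega^2\notin\sigma_{ess}(\beta)$ throughout, so that the eigenvalues $\mu_m(\beta,\omega)$ are defined along the whole range of $\omega$ explored by the equation $\mu_m(\beta,\omega)=\omega^2$; the continuity of $(\beta,\alpha)\mapsto\mu_m(\beta,\alpha)$ from Theorem~\ref{th:charac_mum} then ensures that these roots are meaningful and, in practice, can be located numerically.
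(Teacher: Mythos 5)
Your proof is correct and follows essentially the same route as the paper: the paper states Proposition~\ref{prop:nonlinear_vp} as an immediate consequence of Theorem~\ref{th:equiv_eigpb} combined with the spectral description of $A_0(\beta,\omega)$ (self-adjoint, bounded from below, compact resolvent, hence pure point spectrum $\{\mu_m(\beta,\omega)\}$) from Proposition~\ref{prop:A_0_beta}, which is exactly the chain of equivalences you spell out. Your additional caveat about the exceptional countable set of Theorem~\ref{th:pb+-_bien} matches the paper's standing assumption that the half-band problems $(\mathcal{P}^\pm)$ are well posed.
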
		
	We then infer the iterative algorithm for the computation of the guided modes and associated eigenvalues with two nested loops:
\begin{itemize}
\item the outer loop consists in a fixed point algorithm to solve the non linear equations:\\
$$\mu_m(\beta,\omega)=\omega^2,\quad\omega^2\notin\sigma_{ess}(\beta);$$
\item each iteration of this fixed point algorithm requires the computation of the $m-$th eigenvalue $\mu_m(\beta,\alpha)$ of the operator $A_0(\beta,\alpha)$ (and possibly the derivative of $\mu_m(\beta,\alpha)$ with respect to $\alpha$ if a Newton method is used to solve the fixed point problem). 
\end{itemize}
This algorithm is quite classical for the computation of guided modes in open waveguides (see \cite{Bonnet:2001}). Here the novelty comes from the facts that the eigenvalues $\omega^2$ could belong to any gap of the spectrum and that the operators $\Lambda^\pm(\beta,\omega)$ have no analytical expression. We show in the following section how they can be computed in practice.

	\subsection{Characterization of the DtN operators }
\noindent 	
As explained in \cite{Fliss:2006}, the construction of the operators $\Lambda^\pm(\beta,\alpha)$ is based only on the resolution of a family of cell problems and the resolution of a stationary Ricatti equation. For the sake of clarity, we shall recall the method for the construction of $\Lambda^+$. 
\\\\
We will suppose again in this section that $\alpha^2\notin\sigma_{ess}(\beta)$ and the problem $(\mathcal{P}^+)$ is well posed.
\\\\
We shall use the division of the half-band into periodicity cells separated by vertical segments
(See Figure~\ref{fig:notation_2dw}):
\begin{equation}\label{eq:def_waveguide}
	B^+ = \bigcup_{n=1}^{+\infty} \; \calc_n, \quad \calc_n := \calc_1 +((n-1)L_x,0),
\end{equation}
The segments $\Gamma_n = \Gamma_0+(nL,0)$, with $\Gamma_0=\Gamma^+_a$ can all be identified to the leftmost one $\Gamma_0\sim [-L_y/2,L_y/2]$ and the cells $\calc_n$ can all be identified to the first cell $\calc_1=\calc$.
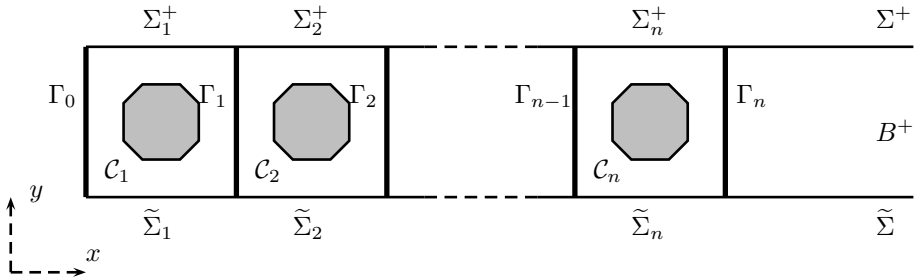
\begin{figure}[htbp]
\begin{center}
	\psset{unit=0.5cm} \psset{linewidth=1pt} 
	\begin{pspicture}(-4,-4)(30,30)
	\psline[linestyle=dashed]{->}(3,-1)(5,-1)
	\psline[linestyle=dashed]{->}(3,-1)(3,1) \put(5,-0.7){$x$}\put(3.5,1){$y$}
	\psline[linewidth=2pt](5,1)(5,5)\put(4,3.5){$\Gamma_0$}
	\put(5.5,1.5){$\calc_1$}
	\pspolygon[fillstyle=solid,fillcolor=lightgray](6.5,2)(7.5,2)(8,2.5)(8,3.5)(7.5,4)(6.5,4)(6,3.5)(6,2.5)
	\psline[linewidth=2pt](9,1)(9,5)\put(8,3.5){$\Gamma_1$}
	\put(9.5,1.5){$\calc_2$}
	\pspolygon[fillstyle=solid,fillcolor=lightgray](10.5,2)(11.5,2)(12,2.5)(12,3.5)(11.5,4)(10.5,4)(10,3.5)(10,2.5)
	\psline[linewidth=2pt](13,1)(13,5)\put(12,3.5){$\Gamma_2$}
	\psline[linewidth=2pt](18,1)(18,5)\put(16.4,3.5){$\Gamma_{n-1}$}
	\put(18.5,1.5){$\calc_n$}
	\pspolygon[fillstyle=solid,fillcolor=lightgray](19.5,2)(20.5,2)(21,2.5)(21,3.5)(20.5,4)(19.5,4)(19,3.5)(19,2.5)
	\psline[linewidth=2pt](22,1)(22,5)\put(22.3,3.5){$\Gamma_n$}
	\psline(5,1)(14,1)\psline[linestyle=dashed](14,1)(17,1)\psline(17,1)(27,1)\put(26,0){$\widetilde{\Sigma}$}\put(6.5,0){$\widetilde{\Sigma}_1$}\put(10.5,0){$\widetilde{\Sigma}_2$}\put(19.5,0){$\widetilde{\Sigma}_n$}
	\psline(5,5)(14,5)\psline[linestyle=dashed](14,5)(17,5)\psline(17,5)(27,5)\put(26,5.5){$\Sigma^+$}\put(6.5,5.5){$\Sigma^+_1$}\put(10.5,5.5){$\Sigma^+_2$}\put(19.5,5.5){$\Sigma^+_n$}
	\put(26,2.5){$B^+$}
	\end{pspicture}
\caption{Notation in a half band}\label{fig:notation_2dw}
\end{center}
\end{figure}
By periodicity in $x$, the construction of the unique solution $u^+(\beta,\alpha,\varphi)$ of $(\mathcal{P}^+)$ will be reduced to the knowledge of a linear operator, called the propagation operator, denoted $P(\beta,\alpha)$ and defined by
\begin{equation}\label{eq:op_P_betaalpha}
	\begin{array}{rl}
	P(\beta,\alpha):\:H^{1/2}_{\beta}(\Gamma_0) \rightarrow& H^{1/2}_{\beta}(\Gamma_0)\\[1pt]
	\:\varphi\mapsto &\dsp u^+(\beta,\alpha,\varphi)\Big|_{\Gamma_1}.
	\end{array}
\end{equation}
One can show that $P(\beta,\alpha)$ for any $\beta$ and $\alpha$ is compact (using interior elliptic regularity for $u^+(\beta,\alpha,\varphi)$), injective  (using an argument of unique continuation) and has a spectral radius less than 1 when $\alpha^2\notin\sigma_{ess}(\beta)$ (because of the $L^2$ nature of $u^+(\beta,\alpha,\varphi)$). See \cite{Fliss:2006,Fliss:2009} for more details on the proof of these results.
\\\\
Using the periodicity of the problem, one easily see that
\[
	\forall n\in\N,\;\forall \varphi\in H^{1/2}_{\beta}(\Gamma_0),\quad u^+(\beta,\alpha,\varphi)\Big|_{\Gamma_n} = \left(	P(\beta,\alpha)\right)^n\,\varphi.
\]
Then by linearity, we have $\forall n\in\N,\;\forall \varphi\in H^{1/2}_{\beta}(\Gamma_0)$
\begin{equation}\label{eq:rebuild_u}
	u^+(\beta,\alpha,\varphi)\Big|_{\mathcal{C}_n} = e^0\left(\beta,\alpha,\left(	P(\beta,\alpha)\right)^{n-1}\,\varphi\right) + e^1\left(\beta,\alpha,\left(	P(\beta,\alpha)\right)^{n}\,\varphi\right)
\end{equation}
where for all $\varphi$ in $H^{1/2}_{\beta}(\Gamma_0)$ $e_0(\beta,\alpha,\varphi)$ and $e_1(\beta,\alpha,\varphi)$ are the solutions in $H^1(\calc)$ to the following elementary cell problems
	\begin{equation}\label{eq:cellpb}
		-\triangle e_\ell-\rho_p\,\alpha^2e_\ell = 0\quad\text{in}\;\mathcal{C},\quad\quad\ell\in\{0,1\}
	\end{equation}
satisfying $\beta$-quasi periodic boundary conditions on $\Sigma_1=\Sigma\cap\calc$ and $\widetilde{\Sigma}_1=\widetilde{\Sigma}\cap\calc$
	\begin{equation}\label{eq:cellpbQP}
		\dsp e_\ell|_{\Sigma_1}=e^{\imath\beta L_y}\,e_\ell|_{\widetilde{\Sigma}_1} \quad\quad
		\dsp \partial_y e_\ell|_{\Sigma_1}=e^{\imath\beta L_y}\partial_y \,e_\ell|_{\widetilde{\Sigma_1}}
	\end{equation}
	and Dirichlet boundary conditions on $\Gamma^0$ and $\Gamma^1$
	\begin{equation}\label{eq:cellpbcond}
	\begin{array}{ccc}
		\begin{array}{|l}
		e_0|_{\Gamma^0} = \varphi\\[3pt]
		e_0|_{\Gamma^1} = 0
		\end{array}&\text{and}&
	\begin{array}{|l}
	e_1|_{\Gamma^0} = 0\\[3pt]
	e_1|_{\Gamma^1} = \varphi
	\end{array}
	\end{array}.
	\end{equation}
	\begin{figure}[htbp]
			\begin{center}
				\psset{unit=0.5cm} \psset{linewidth=1pt} 
				\begin{pspicture}(-5,-5)(30,30)
					\pspolygon[fillstyle = solid,fillcolor=gray](3,0.5)(4,0.5)(4.5,1)(4.5,2)(4,2.5)(3,2.5)(2.5,2)(2.5,1)\pspolygon(2,0)(2,3)(5,3)(5,0)\psline[linewidth = 3pt](2,0)(2,3)\psline[linewidth = 3pt](5,0)(5,3)\put(3,0.8){$\calc$}\put(1.1,0.1){$\Gamma^0$}\put(5.2,0.1){$\Gamma^1$}\put(3.2,3.3){$\Sigma_1$}\put(3.2,-0.6){$\widetilde{\Sigma}_1$}			
				\end{pspicture}
		\caption{The periodicity cell $\calc$}\label{fig:sol_e0e1_2dw_k} 
			\end{center}
		\end{figure}
\begin{remark}
The cell problems are well posed except for a countable set of frequencies which correspond to the eigenvalues of the operator $-\triangle/\rho_p$ defined in a cell with Dirichlet boundary conditions. This set of forbidden frequencies is {\it a priori} different from the one introduced in Theorem \ref{th:pb+-_bien} and Remark \ref{rem:RtR_vs_DtN}. Indeed the first one is introduced because of the Dirichlet boundary conditions set in the vertical boundaries of the cell whereas the second one is introduced because of the Dirichlet boundary conditions set in the vertical boundary of the half band. Imposing Robin-type boundary conditions instead of Dirichlet in the cell problems would solve this issue (and can be used for the characterization of DtN or Robin-to-Robin operators). We choose here Dirichlet-type cell problems only to simplify the presentation of the method.
\end{remark}
~\\\\Formula \eqref{eq:rebuild_u} shows that the computation of the solution $u^+$ is achieved through the characterization of the operator $P(\beta,\alpha)$. At this stage of the exposition, the definition of this operator relies on $u^+(\beta,\alpha,\varphi)$ which is a solution of a problem posed in an unbounded domain. We shall see in the following how to determine this operator by solely solving local  problems of the type (\ref{eq:cellpb}, \ref{eq:cellpbQP}, \ref{eq:cellpbcond}), which is one key point of the method.
\\\\
Note that the relation \eqref{eq:rebuild_u} ensures that $u^+(\beta,\alpha,\varphi)$ is the solution of the Helmholtz equation inside each cell $\calc_n$. To make the characterization complete, we have to add the transmission condition across  $\Gamma_j$ (the continuity of the normal derivative of $u^+$ accross $\Gamma_j$), which gives
\begin{equation*}	T_{10}(\beta,\alpha)P(\beta,\alpha)^2+(T_{00}(\beta,\alpha)+T_{11}(\beta,\alpha))\,P(\beta,\alpha)+T_{01}(\beta,\alpha) = 0.
\end{equation*}
where the operators $T_{pq}(\beta,\alpha)$ are four corresponding local DtN operators :
	\begin{equation}\label{eq:dtn_local}
	\begin{array}{ll}
		 T_{00}(\beta)\varphi= - \partial_xe_0(\beta,\varphi)\Big|_{\Gamma^0}&T_{10}(\beta)\varphi= - \partial_xe_1(\beta,\varphi)\Big|_{\Gamma^0}\\[5pt]
		 T_{01}(\beta)\varphi= - \partial_xe_0(\beta,\varphi)\Big|_{\Gamma^1}&T_{10}(\beta)\varphi= - \partial_xe_1(\beta,\varphi)\Big|_{\Gamma^1}
		\end{array}
	\end{equation}
Actually,  for any $\beta$, if $\alpha^2\notin\sigma_{ess}(\beta)$, this equation characterizes uniquely the operator $P(\beta,\alpha)$:
	\begin{theorem}[Characteristic equation]\label{th:eqcarac}
Suppose that $\alpha^2\notin\sigma_{ess}(\beta)$ and the problem $(\mathcal{P}^+)$ is well posed. The operator $P(\beta,\alpha)$ is then the unique compact operator 
	of $\mathcal{L}\big(H^{1/2}_{\beta}(\Gamma_0)\big)$ satisfying the condition
		\begin{equation}\label{eq:cond_P_beta}
			\rho(P(\beta,\alpha))<1
		\end{equation}
which solves the stationary Riccati equation:		\begin{equation}\label{eq:eqcarac_P_beta}\tag{$\mathcal{E}^P(\beta,\alpha)$}
				\mathcal{T}(\beta,\alpha)\,X =0,
		\end{equation}
where
		\begin{equation*}\label{eq:mathcalT_beta}
			\begin{array}{rl}				\mathcal{T}(\beta,\alpha)\;:\;\mathcal{L}\big(H^{1/2}_{\beta}(\Gamma_0)\big) \rightarrow&  \mathcal{L}\big( H^{1/2}_{\beta}(\Gamma_0)\big)\\[5pt]
				X\mapsto&T_{10}(\beta,\alpha)X^2+(T_{00}(\beta,\alpha)+T_{11}(\beta,\alpha))\,X+T_{01}(\beta,\alpha).
			\end{array}
		\end{equation*}
	\end{theorem}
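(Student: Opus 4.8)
The plan is to prove the two assertions separately: first that the propagation operator $P(\beta,\alpha)$ does solve \eqref{eq:eqcarac_P_beta} (together with the compactness and spectral radius conditions, which are exactly the properties already recalled above), and then that these three conditions characterize it uniquely. Throughout I would freely use the well-posedness of $(\mathcal{P}^+)$ guaranteed by Theorem \ref{th:pb+-_bien}, the unique solvability of the cell problems \eqref{eq:cellpb}--\eqref{eq:cellpbcond}, and the already established facts that $P(\beta,\alpha)$ is compact, injective and satisfies $\rho(P(\beta,\alpha))<1$.

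For the existence direction, I would start from the genuine solution $u^+ = u^+(\beta,\alpha,\varphi)$ of $(\mathcal{P}^+)$, which belongs to $H^1_\beta(\triangle,B^+)$ and hence, as a weak solution of the Helmholtz equation, has continuous Dirichlet trace and continuous normal flux across each interior interface $\Gamma_n$, $n\geq 1$. Inserting the cell representation \eqref{eq:rebuild_u} and using the definition \eqref{eq:dtn_local} of the four local operators $T_{pq}(\beta,\alpha)$, the flux continuity $\partial_x u^+\big|_{\Gamma_n^-}=\partial_x u^+\big|_{\Gamma_n^+}$ becomes, after expressing the one-sided normal derivatives through the $T_{pq}$,
\[
T_{10}(\beta,\alpha)\,P^{n+1}\varphi+\bigl(T_{00}(\beta,\alpha)+T_{11}(\beta,\alpha)\bigr)P^{n}\varphi+T_{01}(\beta,\alpha)\,P^{n-1}\varphi=0 ,
\]
where $P=P(\beta,\alpha)$. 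Because the cell decomposition is invariant under the shift $\calc_n\mapsto\calc_{n+1}$, this relation is nothing but $\bigl[\mathcal{T}(\beta,\alpha)P\bigr]P^{n-1}\varphi=0$; taking $n=1$ and using that $\varphi\in H^{1/2}_\beta(\Gamma_0)$ is arbitrary yields the operator identity $\mathcal{T}(\beta,\alpha)P=0$. Thus $P$ solves \eqref{eq:eqcarac_P_beta} and carries the required spectral properties.

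For uniqueness, let $X\in\mathcal{L}(H^{1/2}_\beta(\Gamma_0))$ be compact with $\rho(X)<1$ and $\mathcal{T}(\beta,\alpha)X=0$. I would reconstruct a candidate field by mimicking \eqref{eq:rebuild_u}, setting $v\big|_{\calc_n}=e^0(\beta,\alpha,X^{n-1}\varphi)+e^1(\beta,\alpha,X^{n}\varphi)$. By construction $v$ solves the Helmholtz equation and the $\beta$-quasiperiodic conditions \eqref{eq:cellpbQP} inside every cell, it matches the datum $\varphi$ on $\Gamma_0$, and the two one-sided traces agree on each $\Gamma_n$. The hypothesis $\mathcal{T}(\beta,\alpha)X=0$, applied to $X^{n-1}\varphi$, is precisely the flux-continuity relation displayed above with $X$ in place of $P$, so $\partial_x v$ is continuous across every $\Gamma_n$ as well and $v$ is a global weak solution in $B^+$. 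Since $\rho(X)<1$, Gelfand's formula gives $\norm{X^n\varphi}\leq C r^n\norm{\varphi}$ for some $\rho(X)<r<1$, which makes the cell energies geometrically summable and places $v$ in $H^1_\beta(\triangle,B^+)$. Consequently $v$ is a solution of $(\mathcal{P}^+)$ with data $\varphi$, and by the well-posedness from Theorem \ref{th:pb+-_bien} it must coincide with $u^+(\beta,\alpha,\varphi)$. Reading the trace on $\Gamma_1$ then gives $X\varphi=v\big|_{\Gamma_1}=u^+(\beta,\alpha,\varphi)\big|_{\Gamma_1}=P(\beta,\alpha)\varphi$ for every $\varphi$, i.e. $X=P(\beta,\alpha)$.

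The main obstacle I anticipate is the functional-analytic bookkeeping in the uniqueness step: one must check that the formally reconstructed $v$ genuinely belongs to $H^1_\beta(\triangle,B^+)$ rather than merely being defined cell by cell. This needs a uniform bound $\norm{e^\ell(\beta,\alpha,\psi)}_{H^1(\calc)}\leq C\norm{\psi}_{H^{1/2}_\beta(\Gamma_0)}$ for the cell solution maps, the geometric decay of $X^n\varphi$ coming from $\rho(X)<1$ to guarantee $\ell^2$-summability of the cell contributions, and the verification that the trace and flux continuity established across all the $\Gamma_n$ indeed upgrade the piecewise field to a function in $H^1(\triangle,B^+)$. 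The sign and orientation conventions relating the one-sided derivatives to the local operators $T_{pq}$ also need care, but they are routine once the outward normals on $\Gamma^0$ and $\Gamma^1$ are fixed.
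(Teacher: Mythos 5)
Your proposal is correct and follows essentially the same route as the paper's own proof: the direct implication is obtained from the continuity of the normal derivative of $u^+(\beta,\alpha;\varphi)$ across the interfaces $\Gamma_n$, and uniqueness is proved by reconstructing a field cell by cell from the powers of a candidate solution $X$, using Gelfand's spectral radius formula to obtain geometric decay and hence membership in $H^1_\beta(\triangle,B^+)$, then invoking the well-posedness of $(\mathcal{P}^+)$ to identify it with $u^+$ and conclude $X=P(\beta,\alpha)$. The functional-analytic details you flag (uniform bounds on the cell solution maps, summability of the cell energies) are treated at the same level of explicitness in the paper, so there is no gap relative to it.
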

	\begin{proof}
		As explained before, the operator $P(\beta,\alpha)$ is a compact operator whose spectral radius is strictly less than one. Moreover, the continuity of the normal derivative of the unique solution $u^+(\beta,\alpha;\varphi)$ accross each $\Gamma_j$, $P(\beta,\alpha)$ is solution of \eqref{eq:eqcarac_P_beta}.\\\\
		Reciprocally, let $P$ be a compact operator with spectral radius $\rho(P)<1$ which satisfies \eqref{eq:eqcarac_P_beta}. The function defined by
		\[
		u(\varphi)\Big|_{\mathcal{C}_n} = e^0\left(\beta,\alpha,	P^{n-1}\,\varphi\right) + e^1\left(\beta,\alpha,	P^{n}\,\varphi\right)
		\]
		satisfies the helmoltz equation cell by cell, is continuous by definition and has continuous normal derivatives accross each $\Gamma_j$ because the operator $P$ is solution of \eqref{eq:eqcarac_P_beta}. Moreover, the fact that $\rho(R)<1$ implies that $u(\varphi)$ belongs to $H^1(B^+)$. Indeed, the property
		\[
			\lim_{n\rightarrow+\infty}\norm{P^n}^{1/n}_{\mathcal{L}(H^{1/2}(\Gamma))}=\rho(P)
		\]
		implies that, for some $\rho_*\in(\rho(P),1)$ and $N$ large enough
		\[
			\forall n\geq N,\quad\norm{P^n}_{\mathcal{L}(H^{1/2}(\Gamma))}\leq \rho_*^n
		\]
		so that, it exists $C$ independant of N, such that
		\[
			\forall n\geq N,\quad\norm{u(\varphi)}_{H^1(\mathcal{C}_n)}\leq C\rho_*^n\norm{\varphi}_{H^{1/2}_{\beta}(\Gamma_0)}.
		\]
		We have then constructed a solution in $H^1_\beta(\triangle,B^+)$ of $(\mathcal{P}^+)$. By well-posedness of this problem, the solution is unique so $u=u^+$ and then $P=P(\beta,\alpha)$.
	\end{proof}~\\\\
\noindent Once $P(\beta,\alpha)$ is determined solving the stationary Ricatti equation, we build cell by cell the solution $u^+$ using \eqref{eq:rebuild_u} and finally using again \eqref{eq:rebuild_u} for $n=0$, we see that
	\begin{equation}\label{eq:expression_dtn}
		\Lambda^+(\beta,\alpha) = T_{00}(\beta,\alpha) + T_{10}(\beta,\alpha)\,P(\beta,\alpha)
	\end{equation}
		\subsection{Characterization of the essential spectrum}\label{sec:ess_spectrum}
	\noindent The characterization of the essential spectrum is a by-product of the determination of the DtN operators.\\\\ Indeed, in \cite{Fliss:2006}, we have shown that when $\alpha^2$ lies in the essential spectrum $\sigma_{ess}(\beta)$, not only the spectral radius of the propagation operator is equal to 1 but in addition this operator is no more the unique solution of the Ricatti equation \eqref{eq:eqcarac_P_beta}. Consequently, we can give this characterization of the essential spectrum of $A$.\\
\begin{theorem}\label{th:ess_spec}
The Ricatti equation \eqref{eq:eqcarac_P_beta} has a unique solution whose spectral radius is strictly less than one if and only if $\alpha^2\notin\sigma_{ess}(\beta)$.
		\end{theorem}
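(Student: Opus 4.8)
The plan is to establish the two implications separately; the forward one is almost entirely contained in the results already proved, while the converse is where the real work (and the input from \cite{Fliss:2006}) is needed.

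First I would treat the direction $\alpha^2\notin\sigma_{ess}(\beta)\Rightarrow$ existence and uniqueness of a solution with spectral radius $<1$. This is essentially a restatement of Theorem~\ref{th:eqcarac}: the propagation operator $P(\beta,\alpha)$ is a compact solution of the Riccati equation \eqref{eq:eqcarac_P_beta} with $\rho(P(\beta,\alpha))<1$, which gives existence, and the reciprocal half of that theorem gives uniqueness — any compact $X$ with $\rho(X)<1$ solving \eqref{eq:eqcarac_P_beta} reconstructs through \eqref{eq:rebuild_u} an $H^1(B^+)$ solution of $(\mathcal{P}^+)$, which by well-posedness must coincide with $u^+$, forcing $X=P(\beta,\alpha)$. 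The only point needing care is the countable exceptional set of Theorem~\ref{th:pb+-_bien} where $(\mathcal{P}^+)$ fails to be well-posed although $\alpha^2\notin\sigma_{ess}(\beta)$; there I would either pass to the Robin-to-Robin formulation of Remark~\ref{rem:RtR_vs_DtN}, for which the half-band problem is well-posed throughout the gap, or remove these $\alpha$ by a continuity argument, the Floquet multipliers depending continuously on $\alpha^2$ inside a gap.

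For the converse I would argue by contraposition and assume $\alpha^2\in\sigma_{ess}(\beta)=\sigma(A_p(\beta))$. By the Floquet-Bloch description of Proposition~\ref{prop:essential_spectrum}, there exist a real exponent $k$ and a nonzero Bloch mode $e^{\imath kx}\phi$, with $\phi$ being $L_x$-periodic in $x$ and $\beta$-quasiperiodic in $y$, solving the Helmholtz equation; its multiplier $r=e^{\imath kL_x}$ has modulus one. The key structural fact is that the Floquet multipliers carry a reciprocal symmetry inherited from the conservation of the transverse flux (a Wronskian identity): away from $\sigma_{ess}(\beta)$ they split into a family of modulus $<1$ and a conjugate-reciprocal family of modulus $>1$, and selecting the first family produces the unique graph-type solution of \eqref{eq:eqcarac_P_beta} with $\rho<1$, namely $P(\beta,\alpha)$. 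When $\alpha^2$ enters $\sigma_{ess}(\beta)$, a multiplier reaches the unit circle and becomes its own reciprocal partner, so any admissible invariant subspace underlying a solution of \eqref{eq:eqcarac_P_beta} must incorporate a unit-modulus mode: the spectral radius of the corresponding propagation operator is then equal to one rather than strictly less than one, and the freedom in distributing these borderline modes produces several distinct solutions of the Riccati equation. In either reading this is precisely the failure of uniqueness of a $\rho<1$ solution.

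The main obstacle is making this last step rigorous in the infinite-dimensional PDE setting rather than in the finite-dimensional transfer-matrix caricature: one has to control the accumulation of the Floquet multipliers (the compactness of $P(\beta,\alpha)$), give meaning to the invariant graph-subspace structure of the solutions of \eqref{eq:eqcarac_P_beta} in the trace spaces $H^{1/2}_{\beta}(\Gamma_0)$, and justify that the limiting-absorption limits $\alpha^2\pm\imath\eps$ yield genuinely different solutions whose spectral radius tends to one. I would import these facts from the analysis of the propagation operator in \cite{Fliss:2006} rather than reproduce them, since they constitute the technical heart of that construction.
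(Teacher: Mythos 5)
Your proposal is correct and follows essentially the same route as the paper: the forward implication is delegated to Theorem~\ref{th:eqcarac}, and the converse is handled by contraposition using Bloch modes at $\alpha^2\in\sigma_{ess}(\beta)$ whose unit-modulus Floquet multipliers (paired by the symmetry $k\mapsto -k$, which is what your ``reciprocal symmetry'' amounts to here) exclude any solution of spectral radius $<1$ and produce several solutions of spectral radius one, with the technical core imported from \cite{Fliss:2006} exactly as the paper does. The only differences are presentational: the paper makes the non-uniqueness explicit by constructing three operators $P_1,P_2,P_3$ from the traces $\varphi^\pm$ of the Bloch modes at $\pm k_0$ (using evenness of $k\mapsto\omega_q(\beta,k)$), while you additionally flag the countable exceptional set where $(\mathcal{P}^\pm)$ is ill-posed, a subtlety the paper silently glosses over.
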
~\\
\begin{proof}
	In Theorem \ref{th:eqcarac}, we have already proven that if $\alpha^2\notin\sigma_{ess}(\beta)$, the Ricatti equation \eqref{eq:eqcarac_P_beta} has a unique solution whose spectral radius is strictly less than one.\\\\
	For $\alpha^2\in\sigma_{ess}(\beta)$, we can show that it does not exist solution of \eqref{eq:eqcarac_P_beta} with spectral radius strictly less than one but solution whose spectral radius equal to one. Moreover, there is no more uniqueness of this kind of solutions. Indeed, by definition of the essential spectrum (see the proof of Proposition \ref{prop:essential_spectrum})
	\[
		\alpha^2\notin\sigma_{ess}(\beta)\quad\Leftrightarrow \quad\exists q,\exists k_0\in]-\pi/L_x,\pi/L_x]\quad\alpha^2=\omega_q(\beta,k_0)
	\]
	where $\omega_q(\beta,k)$ is the $q-$th eigenvalue of the operator $A_p(\beta,k)$ defined in the proof of Proposition \ref{prop:essential_spectrum}. Let us denote $e_q(\beta,k)$ a corresponding eigenvector.
	Using the same idea than in the proof of Proposition \ref{prop:dtn_continuous}, we could show that $k\mapsto\omega_q(\beta,k)$ is an even function and then
	\[
		\omega_q(\beta,k_0) = \omega_q(\beta,-k_0) = \alpha^2
	\]
	Let us denote now $\varphi^\pm=e_q(\beta,\pm k_0)$. Using the well-posedness of the cell problems,  the eigenvectors $e_q(\beta,\pm k_0)$ can be expressed by
	\[
		e_q(\beta,\pm k_0)=e^0(\beta,\alpha,\varphi^\pm)+e^{\pm\imath k_0L_x}\,e^1(\beta,\alpha,\varphi^\pm)
	\]
	and, thanks to their quasi-periodicity, can be extended as $H^1(\triangle,B^+)$ function by
	\[
		e_q(\beta,\pm k_0)\Big|_{\mathcal{C}_n}=e^{\pm\imath(n-1) k_0L_x}\,e^0(\beta,\alpha,\varphi^\pm)+e^{\pm\imath n k_0L_x}\,e^1(\beta,\alpha,\varphi^\pm).
	\]
	We have found then solutions of the problem $(\mathcal{P}^+)$ with $\varphi^\pm$ as Dirichlet boundary condition on $\Gamma_0$. We can then construct at least three solutions of the Ricatti equation whose spectral radius is equal to one :
	\[
		\begin{array}{|ll}
			P_1&\;\text{such that}\; P_1\,\varphi^+=e^{\imath k_0L_x}\,\varphi^+\\[5pt]
			P_2&\;\text{such that}\; P_2\,\varphi^-=e^{-\imath k_0L_x}\,\varphi^-\\[5pt]
			P_3&\;\text{such that}\; P_3\,\varphi^+=e^{\imath k_0L_x}\,\varphi^+\;\text{and}\;P_3\,\varphi^-=e^{-\imath k_0L_x}\,\varphi^-
		\end{array}
	\]
\end{proof}
\section{Numerical results}
\noindent 
We want to apply this approach to compute the guided modes of the media, whose 
refraction index is given by (see the isovalues Figure~\ref{fig:media}): 
\begin{equation}\label{eq:indice}
	\forall\,(x,y)\in[-0.5,0.5]^2,\forall\, n,\, m\in\Z^*,\;\begin{array}{|l}
	\dsp\rho_p(x+n,y+m) = 1+16\exp(-\frac{x^2+y^2}{0.2^2})\\
	\dsp\rho_0(x,y+m)=1
\end{array}
\end{equation}
The period here is equal to $1$.
\begin{figure}[htbp]
	\begin{center}
	\includegraphics[width=8cm]{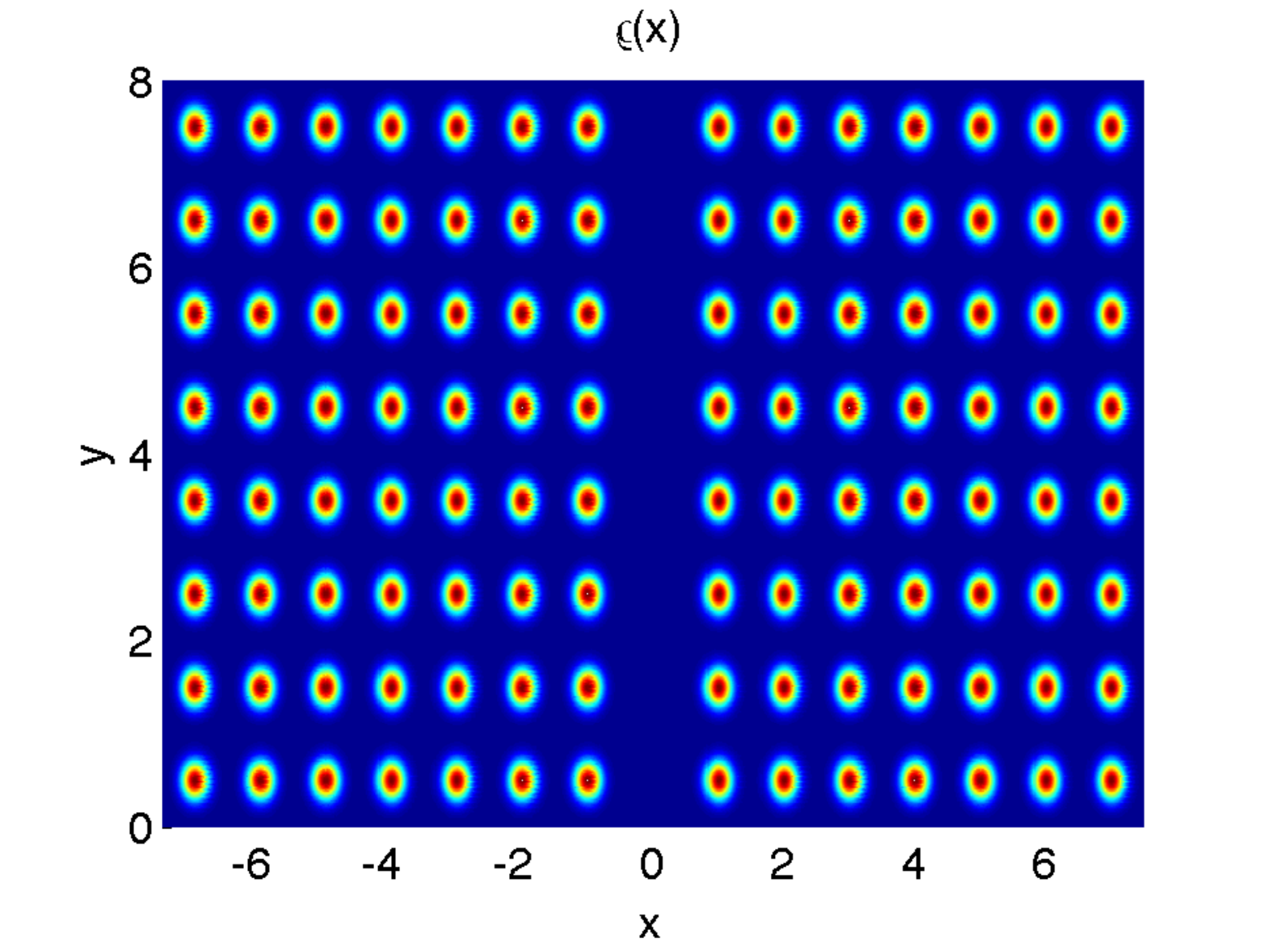}
		\caption{The lineic perturbed periodic media.}\label{fig:media}
	\end{center}
\end{figure}
~\\\\
For any $\beta\in[0,\pi/L_y]$ and $\alpha^2\in[0,20]$, we determine if $\alpha^2$ is in $\sigma_{ess}(\beta)$ or not. If not, we compute the Dirichlet-to-Neumann operators $\Lambda^\pm(\beta,\alpha)$. The corresponding algorithm is the following : for each $\beta$ and $\alpha$
\begin{enumerate}
	\item solve the cell problems (\ref{eq:cellpb},\ref{eq:cellpbQP},\ref{eq:cellpbcond}) - using a FE method or a mixed finite element one (see \cite{Fliss:2006,Fliss:2009,Fliss:2011} for more details) and compute the local DtN operators $T_{pq}^\pm(\beta,\alpha)$ (see \eqref{eq:dtn_local});
	\item solve the stationary equation \eqref{th:eqcarac} using a spectral decomposition method (which leads to a quadratic eigenvalue problem) or a modified Newton algorithm (see \cite{Fliss:2006,Fliss:2009,Fliss:2011} for more details):
	\begin{itemize}
		\item[(i)] If it exists a unique solution whose spectral radius is strictly less than one then $\alpha^2\notin\sigma_{ess}(\beta)$, so we can determine the propagation operator $P^\pm(\beta,\alpha)$ and deduce the DtN operators $\Lambda^\pm(\beta,\alpha)$ by \eqref{eq:expression_dtn}.
		\item[(ii)] If it exists a solution whose spectral radius is equal to one then $\alpha^2\in\sigma_{ess}(\beta)$ and we stop the computation.
	\end{itemize}
\end{enumerate}
We can then, for any $\beta$ and $\alpha$, compute the eigenvalues $\mu_m(\beta,\alpha)$.
\\\\
We plot on Figure~\ref{fig:lignesniveau_mu1} the isovalue lines of the function $\log\abs{\mu_1(\beta,\alpha)-\alpha^2}$ for $\beta\in[0,\pi/L_y]$ and $\alpha^2\in[0,20]\setminus\sigma_{ess}(\beta)$.
\begin{figure}[htbp]
	\begin{center}
		\includegraphics[width=12cm]{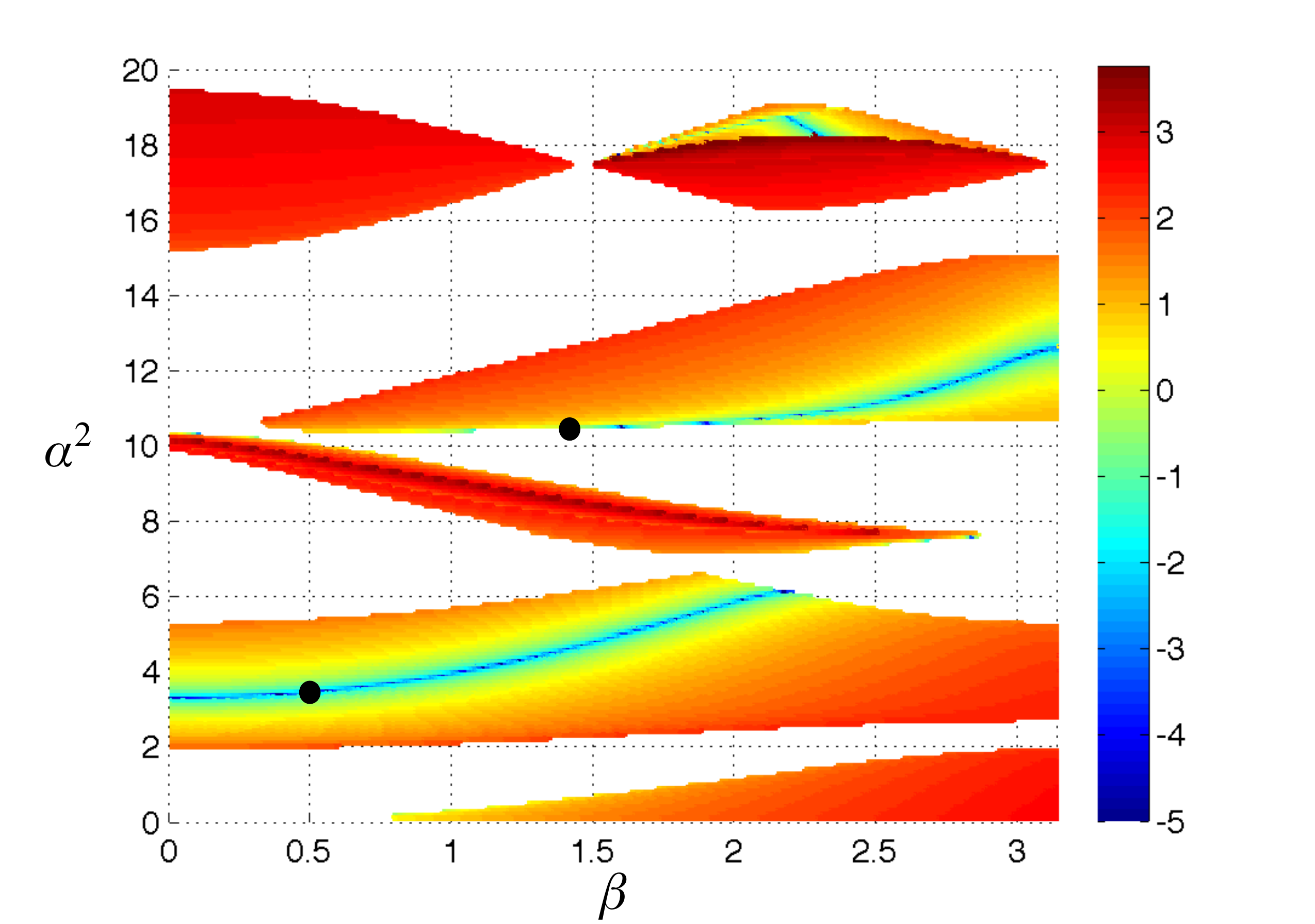}
		\caption{Isovalues of $\log\abs{\mu_1(\beta,\alpha)-\alpha^2}$; $\beta\in[0,\pi/L]$, $\alpha^2\in[0,20]$. For a fixed $\beta$, the white regions correspond to the essential spectrum $\sigma_{ess}(\beta)$. The dispersion curves are given by the blue lines. The two black circles correspond to guided modes represented Figure~\ref{fig:modes}.}\label{fig:lignesniveau_mu1}
	\end{center}
\end{figure}
~\\
For a fixed $\beta$, the white regions correspond to the essential spectrum $\sigma_{ess}(\beta)$. A part of the dispersion curves, given by the values $(\beta,\omega)$ for which $\mu_1(\beta,\omega)-\omega^2 = 0$, are represented by blue lines corresponding to the null isovalue of $\mu_1(\beta,\alpha)-\alpha^2$. The main difference with homogeneous open waveguides is that the
function $\mu_m(\beta,\alpha)-\alpha^2$ may vanish several times for a fixed $\beta$. 
\\\\
We want now to compute for fixed $\beta$, the eigenvalues and the corresponding guided modes. To do so, we use the following steps :
\begin{enumerate}
	\item {\bf Solution of the non linear eigenvalue problem :} using the algorithm of the non linear eigenvalue problem described at the end of Section \ref{sub:solution_algorithm}, $\omega^2$ is computed so that it exists $m$ such that $\mu_m(\beta,\omega)=\omega^2$ and $\omega^2\notin\sigma_{ess}(\beta)$. Let $u_0$ be an eigenvector of $A_0(\beta,\omega)$ associated to the eigenvalue $\mu_m(\beta,\omega)$.
	\item {\bf Construction of the guided modes in one band :} by continuity arguments, the restriction to a band $B$ of a guided mode $u$ is given by
	\[
		\begin{array}{|lcl}
			u\big|_{B_0} &=& u_0\\[5pt]
			u\big|_{B^+} &=& u^+(\beta,\omega,\varphi^+)\quad\text{with}\;\varphi^+ = u_0\big|_{\Gamma^+_a}\\[5pt]
			u\big|_{B^-} &=& u^-(\beta,\omega,\varphi^-)\quad\text{with}\;\varphi^- = u_0\big|_{\Gamma^-_a}
		\end{array}
	\]
	where for any given $\varphi\in H^{1/2}_\beta(\Gamma_a^\pm)$, $u^\pm(\beta,\omega,\varphi)$ is the unique solution of \eqref{eq:pbplus}. To compute these solutions, we have to
	\begin{itemize}
	\item solve the cell problems (\ref{eq:cellpb},\ref{eq:cellpbQP},\ref{eq:cellpbcond}) for $\alpha=\omega$ and compute the local DtN operators $T_{pq}^\pm(\beta,\omega)$ (see \eqref{eq:dtn_local});
	\item solve the stationary equation \eqref{th:eqcarac}. Since $\omega^2\notin\sigma_{ess}(\beta)$, it exists a unique solution whose spectral radius is strictly less than one which is  the propagation operator $P^\pm(\beta,\omega)$ 
	\item reconstruct the solutions cell by cell using \eqref{eq:rebuild_u}.
\end{itemize}
\item {\bf Construction of the guided modes in the whole domain :} by $\beta-$quasi-periodicity, the guided modes can be constructed "band by band" :
\[
	\forall(x,y)\in B,\;\forall q\in\Z,\quad u(x,y+qL_y) = u(x,y)e^{\imath q \beta L_y}.
\]
\end{enumerate}
We then represent  Figure~\ref{fig:modes} guided modes for two values of $\beta$ in eight periods from each side of the line defect. Figure ~\ref{subfig:modeconfine} corresponds to the eigenvalue in the first gap of $A(0.5)$ with a well confined mode whereas in Figure~\ref{subfig:modenonconfine} the eigenvalue belongs to the fourth gap of $A(1.9)$) and the associated guided mode is not well confined. 
\begin{figure}[htbp]
	\begin{center}
		\subfigure[$\beta=0.5,\,\omega^2 = 3.465$]{	\includegraphics[width=8cm]{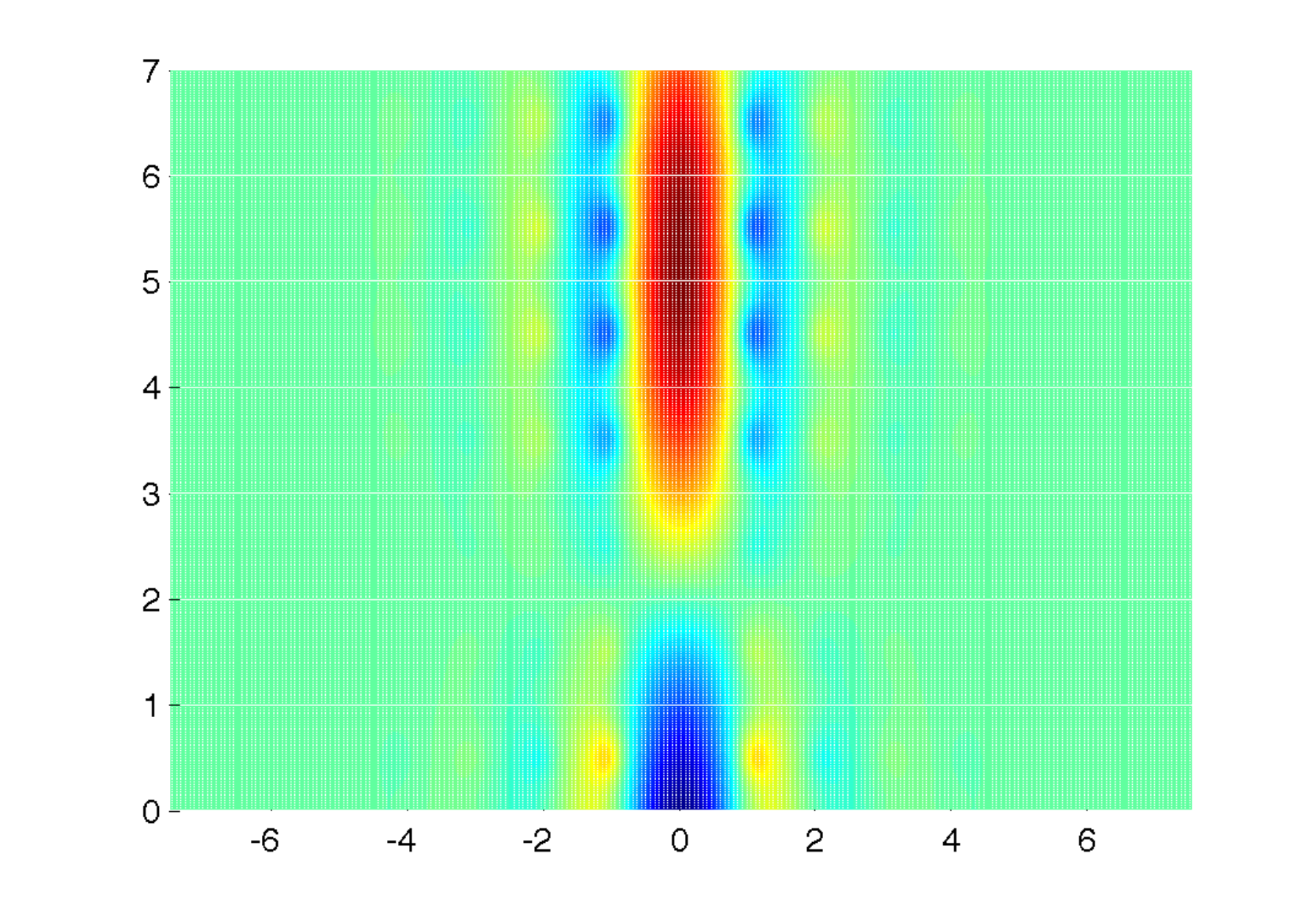}\label{subfig:modeconfine}
		}
		\subfigure[$\beta=1.42,\,\omega^2 = 10.46$]{
	\includegraphics[width=8cm]{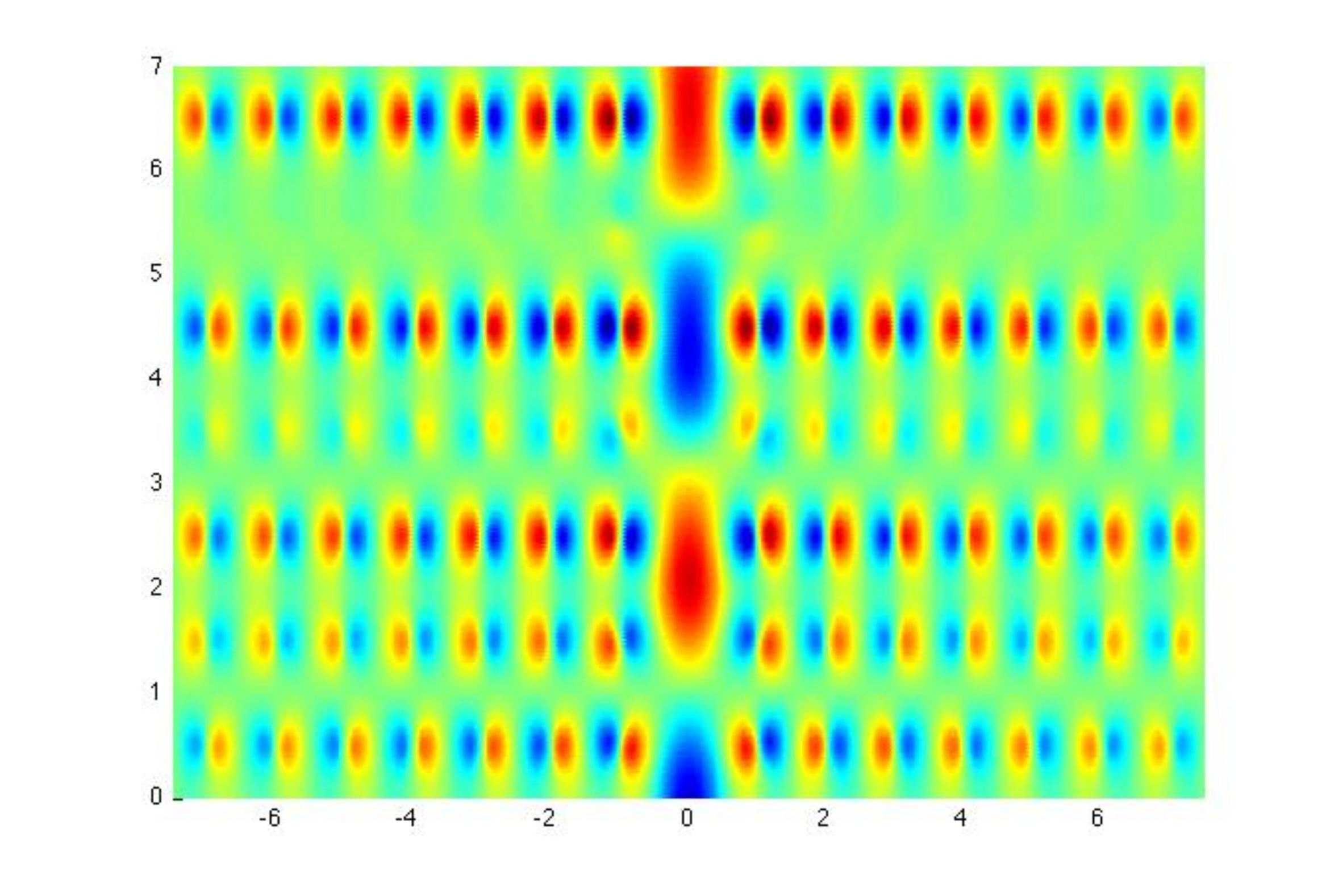}\label{subfig:modenonconfine}
		}
		\caption{Two guided modes: one is well confined (left); the other not well confined (right). The couples $(\beta,\omega^2)$ of this guided modes are represented by black circles in Figure~\ref{fig:lignesniveau_mu1}}\label{fig:modes}
	\end{center}
\end{figure}
\section{Conclusions}
\noindent We have shown that this method is well adapted to the computation of guided modes which are not well confined. The advantage of this method is that it is exact independently from the confinement of the computed guided modes. It can be well adapted for the study of the dispersive curves near the essential spectrum. In terms of computation time, the solution of the non linear eigenvalue problem is much more costly than the one of a linear eigenvalue problem. However, in contrast to existing methods, we do not have to compute independently -and often using a separate code- the essential spectrum of the media: here it is a by-product of this method.\\\\
In a forthcoming article, we will compare exactly this method with the Supercell Method in terms of accuracy and computation time. Moreover, this DtN strategy will be applied to a $\beta$-formulation  which is more adapted for dispersive media.\\\\
Another interesting perspective concerns the study of the dispersive curves: with our characterization, we could study if they can or cannot be constant with respect to $\beta$ (and conclude on the existence or not of bound state) and analyze their behavior near the essential spectrum.

\section*{Acknowledgments}
The author would like to thank P. Joly and A.-S. Bonnet-BenDhia for numerous discussions on the subject of this article.

\bibliographystyle{siam}
\bibliography{biblio}
\end{document}